\newcommand{\T}{{\cal T}}
\newcommand{\Real}{\mathbb R}
\newcommand{\set}[1]{\left\{#1\right\}}
\def\Section#1{\vspace{30truept}\addtocounter{section}{1}\setcounter{thm}{0}\setcounter{equation}{0}
{\noindent\Large\bf\arabic{section}.~~#1}\par \vspace{12pt}}
\newtheorem{thm}{Theorem}[section]
\newtheorem{cor}[thm]{Corollary}
\newtheorem{lem}[thm]{Lemma}
\newtheorem{prop}[thm]{Proposition}
\newtheorem{defn}[thm]{Definition}
\newtheorem{rem}[thm]{Remark}
\numberwithin{equation}{section}
\begin{document}
\title{\bf Characterization of Finsler Spaces\\ of Scalar Curvature}
\author{\bf{ Nabil L. Youssef$^{\,1}$ and A. Soleiman$^{2}$}}
\date{}
\maketitle                     
\vspace{-1.16cm}
\begin{center}
{$^{1}$Department of Mathematics, Faculty of Science, Cairo
University, Giza, Egypt.\\ nlyoussef@sci.cu.edu.eg,\, nlyoussef2003@yahoo.fr}
\end{center}

\begin{center}
{$^{2}$Department of Mathematics, Faculty of Science, Benha
University, Benha,
 Egypt.\\ amr.hassan@fsci.bu.edu.eg,\, amrsoleiman@yahoo.com}
\end{center}

\vspace{0.7cm} \maketitle
\smallskip


\noindent{\bf Abstract.}
The aim of the present paper is to
provide an intrinsic investigation of two special
Finsler spaces whose defining properties are related to Berwald connection, namely,
Finsler space of scalar curvature and of constant curvature. Some
characterizations of a Finsler space of scalar
curvature are proved. Necessary and sufficient conditions
under which a Finsler space of scalar curvature reduces to a
Finsler space of constant curvature are investigated.

\vspace{3pt}
\medskip\noindent{\bf Keywords.\/}\, Berwald connection, Deviation tensor, indicatory tensor,
 Finsler manifold of scalar curvature, Finsler manifold of constant curvature.

\vspace{3pt}
\medskip\noindent{\bf MSC 2010.\/} 53C60,
53B40
\bigskip
\vspace{8pt}

\vspace{30truept}\centerline{\Large\bf{Introduction}}\vspace{12pt}
\par
Special Finsler spaces are investigated locally by many
authors (cf., for example,\linebreak \cite{r75}-\cite{r34}, \cite{r42}, \cite{sak.}, \cite{yos.1}).  On
the other hand, the global or intrinsic investigation of such spaces is rare in the
literature. Some contributions in this direction are found in
 \cite{r48} and \cite{r86}.
\par
In the present paper, we treat intrinsically two types of special Finsler spaces: Finsler space of scalar curvature and
Finsler space of constant curvature. Some characterizations of Finsler spaces of scalar curvature are proved. Necessary and
sufficient conditions for Finsler space of scalar curvature to reduces to a Finsler space of constant curvature are investigated.
\par
It should be noted that some important results of \cite{r42},
\cite{sak.} and \cite{yos.1} are
retrieved from the obtained global results, when localized.
\newpage


\Section{Notation and Preliminaries}
In this section, we give a brief account of the basic concepts
 of the pullback approach to intrinsic Finsler geometry necessary for this work. For more
 details, we refer to \cite{r58}, \cite{r93} and~\,\cite{r48}.
The following notation will be used throughout this paper:\\
$M$: a real differentiable manifold of finite dimension $n$ and class $C^{\infty}$,\\
$\mathfrak{F}(M)$: the $\Real$-algebra of differentiable functions on $M$,\\
$\pi: \T M\longrightarrow M$: the subbundle of nonzero vectors tangent to $M$,\\
$P:\pi^{-1}(TM)\longrightarrow \T M$ : the pullback of the tangent bundle $TM$ by $\pi$,\\
$\mathfrak{X}(\pi (M))$: the $\mathfrak{F}(\T M)$-module of differentiable sections of  $\pi^{-1}(T M)$,\\
$ i_{X}$ : the interior product with respect to  $X
\in\mathfrak{X}(M)$,
\par
Elements  of  $\mathfrak{X}(\pi (M))$ will be called
$\pi$-vector fields and will be denoted by barred letters
$\overline{X} $. Tensor fields on $\pi^{-1}(TM)$ will be called
$\pi$-tensor fields. The fundamental $\pi$-vector field is the
$\pi$-vector field $\overline{\eta}$ defined by
$\overline{\eta}(u)=(u,u)$ for all $u\in \T M$. We have the
following short exact sequence of vector bundles:\vspace{-0.1cm}
$$0\longrightarrow
 \pi^{-1}(TM)\stackrel{\gamma}\longrightarrow T(\T M)\stackrel{\rho}\longrightarrow
\pi^{-1}(TM)\longrightarrow 0 ,\vspace{-0.1cm}$$
 the bundle morphisms  $\rho$ and $\gamma$ being defined as usual \cite{r92}.

Let $D$ be  a linear connection (or simply a connection) on the
pullback bundle $\pi^{-1}(TM)$.
 The connection (or the deflection) map associated with $D$ is defined by \vspace{-0.1cm}
$$K:T \T M\longrightarrow \pi^{-1}(TM):X\longmapsto D_X \overline{\eta}.
\vspace{-0.1cm}$$ A tangent vector $X\in T_u (\T M)$ at $u\in \T M$
is horizontal if $K(X)=0$ . The vector space $H_u (\T M)= \{ X \in
T_u (\T M) : K(X)=0 \}$ is the horizontal space at $u$.
A connection $D$ is said to be regular if\,
$T_u (\T M)=V_u (\T M)\oplus H_u (\T M)\,\, \forall u\in \T M $, where $V_u (\T M)$ is the vertical space at $u$.
Let $\beta:=(\rho |_{H(\T M)})^{-1}$, called the horizontal map of the connection
$D$,  then
   \begin{align*}\label{fh1}
    \rho\circ\beta = id_{\pi^{-1} (TM)}, \quad  \quad
       \beta\circ\rho =   id_{H(\T M)} & {\,\,\,\, \text{on}\,\,   H(\T M)}.\vspace{-0.25cm}
\end{align*}
For a regular connection $D$, the horizontal and vertical covariant
derivatives $\stackrel{1}D$ and $\stackrel{2}D$ are defined, for a
vector (1)$\pi$-form $A$, for example, by \vspace{-0.2cm}
 $$ (\stackrel{1}D A)(\overline{X}, \overline{Y}):=
  (D_{\beta \overline{ X}} A)( \overline{Y}), \quad
  (\stackrel{2}D A)( \overline{X},  \overline{Y}):= (D_{\gamma \overline{X}} A)(  \overline{Y}).$$
\par
The horizontal
((h)h-) and mixed ((h)hv-) torsion tensors of the connection $D$ are defined respectively
by \vspace{-0.1cm}
$$Q (\overline{X},\overline{Y}):=\textbf{T}(\beta \overline{X}\beta \overline{Y}),
\, \,\,\,\, T(\overline{X},\overline{Y}):=\textbf{T}(\gamma
\overline{X},\beta \overline{Y}) \quad \forall \,
\overline{X},\overline{Y}\in\mathfrak{X} (\pi (M)),$$
where $\textbf{T}(X,Y)=D_X \rho Y-D_Y\rho X -\rho [X,Y] $ is the (classical)  torsion of the connection $D$\\
The horizontal (h-), mixed (hv-) and vertical (v-)
curvature tensors of $D$ are defined respectively by
$$R(\overline{X},\overline{Y})\overline{Z}:=\textbf{K}(\beta
\overline{X}\beta \overline{Y})\overline{Z},\quad
P(\overline{X},\overline{Y})\overline{Z}:=\textbf{K}(\beta
\overline{X},\gamma \overline{Y})\overline{Z},\quad
S(\overline{X},\overline{Y})\overline{Z}:=\textbf{K}(\gamma
\overline{X},\gamma \overline{Y})\overline{Z},\vspace{-5pt}$$
where\, $ \textbf{K}(X,Y)\rho Z=-D_X D_Y \rho Z+D_Y D_X \rho Z+D_{[X,Y]}\rho Z$\, is the (classical) curvature tensor of the connection $D$.\\
The cotracted curvatures of $D$ or the (v)h-, (v)hv- and (v)v-torsion tensors  are defined respectively by
$$\widehat{R}(\overline{X},\overline{Y}):={R}(\overline{X},\overline{Y})\overline{\eta},\quad
\widehat{P}(\overline{X},\overline{Y}):={P}(\overline{X},\overline{Y})\overline{\eta},\quad
\widehat{S}(\overline{X},\overline{Y}):={S}(\overline{X},\overline{Y})\overline{\eta}.$$

\begin{thm}{\em\cite{r92}} \label{th.1a} Let $(M,L)$ be a Finsler manifold. There exists a
unique regular connection ${{D}}^{\circ}$ on $\pi^{-1}(TM)$ such
that
\begin{description}
 \item[(a)] $D^{\circ}_{h^{\circ}X}L=0$,
  \item[(b)]   ${{D}}^{\circ}$ is torsion-free\,{\em:} ${\textbf{T}}^{\circ}=0 $,
  \item[(c)]The (v)hv-torsion tensor $\widehat{P^{\circ}}$ of ${D}^{\circ}$ vanishes\,\emph{:}
   $\widehat{P^{\circ}}(\overline{X},\overline{Y})= 0$.
  \end{description}
Such a connection is called Berwald connection associated with $(M,L)$.
\end{thm}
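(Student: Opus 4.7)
I would follow the classical strategy of splitting the argument into \emph{uniqueness} and \emph{existence}, rephrased in the pullback--bundle language used in the paper. The backbone is that condition (a) forces the associated semispray $S(u):=\beta(u)\overline{\eta}(u)$ to coincide with the canonical (geodesic) spray of the Finsler metric $L$, while conditions (b) and (c) then pin down how the linear connection acts on horizontal and vertical lifts.

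\emph{Uniqueness.} Let $D$ and $D'$ both satisfy (a)--(c), with respective horizontal maps $\beta,\beta'$. Since $K\circ\beta=0$ and $K$ restricts to the identity on $\gamma$-lifts of $\overline{\eta}$, the map $S:=\beta\overline{\eta}$ is a semispray on $TM$. Torsion-freeness (b) evaluated on $\overline{\eta}$ yields that $S$ is positively homogeneous of degree $2$ in the fibre variable, and condition (a) reads $(\beta\overline{X})\cdot L=0$; taking $\overline{X}=\overline{\eta}$ gives $S\cdot L^{2}=0$. These two properties characterize the canonical spray of $L$, so $S$ is independent of the chosen connection and thus $\beta=\beta'$. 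With $\beta$ fixed, (c) determines $D_{\gamma\overline{X}}\overline{\eta}$, and hence $D_{\gamma\overline{X}}$ on all basic sections by the Leibniz rule and homogeneity; (b) then determines $D_{\beta\overline{X}}\overline{Y}$ via $D_{\beta\overline{X}}\overline{Y}-D_{\beta\overline{Y}}\overline{X}=\rho[\beta\overline{X},\beta\overline{Y}]$. Consequently $D=D'$.

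\emph{Existence.} Starting from the canonical spray $S$ of $(M,L)$, I would take $\beta$ to be the right-inverse of $\rho$ whose image is the horizontal distribution of the nonlinear connection $\Gamma:=[J,S]$, where $J:=\gamma\circ\rho$ is the vertical endomorphism of $T(TM)$. Define $D^{\circ}$ on basic sections by declaring $D^{\circ}_{\gamma\overline{X}}\overline{Y}$ to be the natural vertical derivative of $\overline{Y}$ along $\gamma\overline{X}$, and $D^{\circ}_{\beta\overline{X}}\overline{Y}:=\rho[\beta\overline{X},\gamma\overline{Y}]$, extending by linearity and the Leibniz rule. Regularity is immediate from the splitting induced by $\Gamma$; (a) follows from $S\cdot L^{2}=0$ together with the horizontality of $\beta\overline{X}$; and (b) follows from the very definition of $D^{\circ}_{\beta\overline{X}}\overline{Y}$ once one uses $\rho\circ\gamma=0$ and $\rho\circ\beta=\mathrm{id}$.

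\emph{Main obstacle.} The technical heart of the argument is the verification of (c), i.e.\ the intrinsic vanishing of $\widehat{P^{\circ}}$. Expanded, this is the identity $D^{\circ}_{\beta\overline{X}}D^{\circ}_{\gamma\overline{Y}}\overline{\eta}-D^{\circ}_{\gamma\overline{Y}}D^{\circ}_{\beta\overline{X}}\overline{\eta}-D^{\circ}_{[\beta\overline{X},\gamma\overline{Y}]}\overline{\eta}=0$, whose local avatar is the symmetry of the Berwald coefficients $G^{i}_{jk}=\partial^{2}G^{i}/\partial y^{j}\partial y^{k}$ in the lower indices. Translating this symmetry into an intrinsic identity among $J$, $S$, $[J,S]$ and the Lie bracket on $TM$ is where the real work lies; once it is established, conditions (a) and (b) follow with little additional effort. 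In a rigorous write-up I would invoke this computation from \cite{r92} rather than redo it.
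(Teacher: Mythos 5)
Be aware first that the paper contains no proof of this statement: Theorem \ref{th.1a} is imported verbatim from \cite{r92}, so your sketch can only be judged against the standard argument. Judged that way, it has two genuine gaps. The first is the defining formula $D^{\circ}_{\beta\overline{X}}\overline{Y}:=\rho[\beta\overline{X},\gamma\overline{Y}]$. Since $\gamma\overline{Y}$ is vertical and $\rho$ annihilates vertical vectors, one has $\rho[\beta\overline{X},\gamma(f\overline{Y})]=f\,\rho[\beta\overline{X},\gamma\overline{Y}]$ for every $f\in\mathfrak{F}(\T M)$: the expression is function-linear in $\overline{Y}$, so it violates the Leibniz rule and cannot be a covariant derivative; locally it equals $-(\gamma\overline{Y})(X^{i})\,\overline{\partial_{i}}$ and contains no spray coefficients at all. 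What is needed is the \emph{vertical} component of the bracket read back through $\gamma$, namely $D^{\circ}_{\beta\overline{X}}\overline{Y}:=\gamma^{-1}\bigl(v[\beta\overline{X},\gamma\overline{Y}]\bigr)$ with $v$ the vertical projector of the nonlinear connection. Once this is corrected, condition (c) becomes nearly tautological, because $\widehat{P^{\circ}}(\overline{X},\overline{Y})=-D^{\circ}_{\beta\overline{X}}\overline{Y}+\gamma^{-1}v[\beta\overline{X},\gamma\overline{Y}]$ as soon as $K\circ\beta=0$ and $K\circ\gamma=\mathrm{id}$ hold; so your identification of (c) as the main obstacle is inverted --- the delicate points are (b) on the horizontal--horizontal slot and, above all, uniqueness.

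The second gap is in the uniqueness argument: you assert that positive homogeneity together with $S\cdot L^{2}=0$ characterizes the canonical spray. It does not. Writing $S=y^{i}\partial_{i}-2G^{i}\dot{\partial}_{i}$, the condition $S\cdot L^{2}=0$ is the single scalar equation $y^{i}\partial_{i}L^{2}=2G^{i}\dot{\partial}_{i}L^{2}$; replacing $G^{i}$ by $G^{i}+V^{i}$ with $V^{i}$ homogeneous of degree $2$ and $g_{ij}V^{i}y^{j}=0$ yields an $(n-1)$-parameter family of homogeneous sprays satisfying the same equation. The geodesic spray is singled out by the full variational characterization $i_{S}\,dd_{J}E=-dE$ (with $J$ the vertical endomorphism you already invoke and $E=\tfrac{1}{2}L^{2}$), not by $S E=0$ alone. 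To close the argument you must exploit all of condition (a) --- $\beta\overline{X}\cdot L=0$ for \emph{every} $\overline{X}$, not only $\overline{X}=\overline{\eta}$ --- together with the fact that (b) and (c) force the connection coefficients to be the second fibre derivatives of the spray coefficients; only then are the spray, hence $\beta$, hence $D^{\circ}$ determined. This is exactly the computation performed in \cite{r92}, and it cannot be replaced by the one-line appeal you make.
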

\vspace{-2pt}

Throughout the paper $R^\circ,\, \widehat{R^{\circ}}$ and $H:=i_{\overline{\eta}}\,\widehat{R^{\circ}}$ will denote respectively
the $h$-curvature, the $(v)h$-torsion and the deviation tensor  of Berwald connection $D^\circ$. Moreover, $\stackrel{1}{D^{\circ}}$  and $\stackrel{2}{D^{\circ}}$  will denote respectively the horizontal covariant derivative and the vertical covariant derivative associated with $D^{\circ}$.


\Section{Finsler Space of Scalar Curvature}

In this section, we establish intrinsically some characterizations of the property of being of scalar curvature.

Let $(M,L)$ be a Finsler manifold and $g$ the Finsler metric defined by $L$. Denote $\ell:=L^{-1}i_{\overline{\eta}}\,g$,\,\,
$\phi(\overline{X}):=\overline{X}-L^{-1}\ell(\overline{X})\overline{\eta}$\, and  \,$\hbar( \overline{X},\overline{Y}):=g(\phi(\overline{ X}),
\overline{ Y})=g(\overline{ X}, \overline{Y})-\ell(\overline{ X}) \ell(\overline{Y})$\,, the
angular metric tensor.

\begin{defn}\label{sca.}{\em{\cite{r86}}} A Finsler manifold $(M,L)$ of dimension $n\geq 3$ is
said to be of scalar curvature  if the deviation tensor $H$ satisfies
$$H(\overline{X})=k L^{2} \phi(\overline{X}), $$
where $k$ is a scalar function on $\T M$, positively homogenous of degree zero in $y$ ($h(0)$)
\footnote{$\omega$ is $h(r)$ in $y$ iff $D^{\circ}_{\gamma \overline{\eta}}\,\omega=r\omega$.},
called scalar curvature \\
In particular, if the scalar curvature $k$ is constant, then $(M,L)$ is called a Finsler manifold of constant curvature.
\end{defn}

\begin{defn}\label{ind.}{\em{\cite{r86}}} An operator $\mathcal{P}$, called the projection operator of indicatrix, is defined as follows:\\
(a) If $\omega$ is a $\pi$-tensor field of type {\em(1,p)}, then 
\begin{equation*}\label{eq.i2}
    (\mathcal{P}\cdot \omega)(\overline{X}_{1},..., \overline{X}_{p}):= \phi(\omega(\phi(\overline{X}_{1}),..., \phi(\overline{X}_{p}))).
\end{equation*}
(b) If $\omega$ is a $\pi$-tensor field  of type {\em(0,p)}, then 
\begin{equation*}\label{eq.i2}
    (\mathcal{P}\cdot \omega)(\overline{X}_{1},..., \overline{X}_{p}):= \omega(\phi(\overline{X}_{1}),..., \phi(\overline{X}_{p})).
    \end{equation*}
(c) In particular, a $\pi$-tensor field $\omega$ is said to be an indicatory tensor
    if\, $\mathcal{P}\cdot \omega=\omega$.
\end{defn}

\begin{rem} \label{rem.1}
The projection $\mathcal{P}$ preserves tensor type. The $\pi$-tensor fields $\phi$ and $\hbar$
are indicatory. Moreover, for any $\pi$-tensor field $\omega$, $\mathcal{P}\cdot \omega$ is indicatory.
\end{rem}

The following result provides some characterizations of Finsler spaces of scalar\linebreak  curvature.
\begin{thm}\label{thm.1} Let $(M,L)$ be a Finsler manifold  of dimension $n\geq 3$. The following assertion are equivalent:
\begin{description}
  \item[(a)] $(M,L)$ is of scalar curvature $k$,
  \item[(b)] The $(v)h$-torsion tensor $\widehat{R^{\circ}}$ satisfies
\begin{equation}\label{(v)h-tors}
  \widehat{R^{\circ}}(\overline{X},\overline{Y})=\mathfrak{A}_{\overline{X},\overline{Y}}
  \{L\phi(\overline{Y})[k\ell(\overline{X})+\frac{1}{3}C(\overline{X})]\}.
\end{equation}
\item[(c)] The $h$-curvature tensor ${R^{\circ}}$ has the form
\begin{eqnarray}\label{h-curv}
   {R^{\circ}}(\overline{X},\overline{Y})\overline{Z}&=& \mathfrak{A}_{\overline{X},\overline{Y}}\{\phi(\overline{Y})[(k\ell(\overline{X})+\frac{1}{3}C(\overline{X}))
    +\frac{1}{3}B(\overline{Z},\overline{X})\nonumber\\
   && + \frac{2}{3}\ell(\overline{X})C(\overline{Z})+k\hbar(\overline{Z},\overline{X})]+
   \frac{1}{3}\ell(\overline{X})C(\overline{Y})\phi(\overline{Z})\nonumber\\
   &&+L^{-1}\hbar(\overline{X},\overline{Z})\overline{\eta}\,
   [k\ell(\overline{Y})+\frac{1}{3}C(\overline{Y})]\},
\end{eqnarray}
\end{description}
where\,\,\,
$\mathfrak{A}_{\overline{X},\overline{Y}}\set{\omega(\overline{X},\overline{Y})}:=
\omega(\overline{X},\overline{Y})-\omega(\overline{Y},\overline{X})$,
\begin{eqnarray}\label{C}
C(\overline{X})&:=&L(\stackrel{2}{D^{\circ}}k)(\overline{X}),\ \    B(\overline{X},\overline{Y}):=L(\mathcal{P}\cdot \stackrel{2}{D^{\circ}}C)(\overline{X}, \overline{Y})
\end{eqnarray}
\end{thm}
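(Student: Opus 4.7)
The plan is to establish the cyclic chain of implications $(a) \Rightarrow (b) \Rightarrow (c) \Rightarrow (a)$. The closing implication $(c) \Rightarrow (a)$ is the easiest: setting $\bar Z = \bar\eta$ in (c) collapses every term involving $\bar Z$, since $\phi(\bar\eta) = 0$, $\hbar(\bar\eta,\cdot) = 0$, and $C(\bar\eta) = L(\stackrel{2}{D^{\circ}}k)(\bar\eta) = 0$ by the $h(0)$-homogeneity of $k$. What remains is a formula for $R^{\circ}(\bar X, \bar Y)\bar\eta = \widehat{R^{\circ}}(\bar X, \bar Y)$, and a further substitution $\bar X = \bar\eta$ (using $\ell(\bar\eta) = L$) reproduces $H(\bar Y) = kL^{2}\phi(\bar Y)$, which is (a).

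The two forward implications both rest on Bianchi-type identities for the Berwald connection that recover higher-order curvature data from its $\bar\eta$-traces via the vertical covariant derivative $\stackrel{2}{D^{\circ}}$. For $(a) \Rightarrow (b)$, the relevant identity, deducible from $\mathbf{T}^{\circ} = 0$ together with $\widehat{P^{\circ}} = 0$ (cf.~\cite{r48}), is
\begin{equation*}
\widehat{R^{\circ}}(\bar X, \bar Y) \;=\; \tfrac{1}{3}\,\mathfrak{A}_{\bar X, \bar Y}\bigl\{(\stackrel{2}{D^{\circ}}H)(\bar X, \bar Y)\bigr\},
\end{equation*}
expressing the $(v)h$-torsion as (one third of) the skew vertical derivative of the deviation tensor. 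Substituting $H = kL^{2}\phi$ and expanding via Leibniz using the standard vertical derivatives of $L$, $\ell$ and $\phi$, the symmetric pieces drop out under $\mathfrak{A}_{\bar X, \bar Y}$; the derivative $\stackrel{2}{D^{\circ}}k$ supplies precisely the $C$-term, and collecting the residue yields exactly the right-hand side of (b).

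For $(b) \Rightarrow (c)$, the mechanism climbs one step further: a companion Bianchi-type identity expresses the full $h$-curvature $R^{\circ}$ as the vertical derivative of $\widehat{R^{\circ}}$ up to $\bar\eta$-corrections (again a consequence of $\widehat{P^{\circ}} = 0$, reflecting that $R^{\circ}$ is essentially determined by its action on $\bar\eta$). Inserting (b) and expanding by the product rule, the vertical derivative of the scalar factor $k\ell(\bar X) + \tfrac{1}{3}C(\bar X)$ produces, after the projection $\mathcal{P}$, exactly the $B$-term (by the very definition of $B$) together with the cross-terms $\tfrac{2}{3}\ell(\bar X)C(\bar Z)$ and $k\hbar(\bar Z, \bar X)$; differentiating $L\phi(\bar Y)$ generates the $L^{-1}\hbar(\bar X, \bar Z)\bar\eta$ correction. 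All numerical coefficients are then fixed by the $\tfrac{1}{3}$ emerging from the two Bianchi reductions applied in sequence.

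The main obstacle is the clean intrinsic derivation of the two Bianchi identities above. Their local (coordinate) counterparts are classical, but the coordinate-free versions require a careful cyclic manipulation of $R^{\circ}$ on mixed horizontal/vertical arguments, combined with the commutator identities for the morphisms $\beta, \gamma$ and the defining axioms $\mathbf{T}^{\circ} = 0$ and $\widehat{P^{\circ}} = 0$ of the Berwald connection. Once these identities are secured, the remainder is long but mechanical bookkeeping, and the definitions of $C$ and $B$ have been engineered so that the computation closes up without residual terms.
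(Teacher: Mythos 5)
Your proposal follows the paper's own route exactly: the same cyclic chain $(a)\Rightarrow(b)\Rightarrow(c)\Rightarrow(a)$, hinging on the two identities $\widehat{R^{\circ}}=\frac{1}{3}\mathfrak{A}_{\overline{X},\overline{Y}}\{(\stackrel{2}{D^{\circ}}H)(\overline{X},\overline{Y})\}$ and $R^{\circ}(\overline{X},\overline{Y})\overline{Z}=(\stackrel{2}{D^{\circ}}\widehat{R^{\circ}})(\overline{Z},\overline{X},\overline{Y})$ (which the paper imports from Theorem 4.6 of \cite{r96} rather than rederiving), followed by the same Leibniz expansions and the same substitution $\overline{X}=\overline{Z}=\overline{\eta}$ to close the loop. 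The argument is correct and essentially identical to the paper's.
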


To prove this theorem, we need the following three lemmas, which can easily be proved.

\begin{lem}\label{lem.1}
For a Finsler manifold $(M,L)$, we have:
\begin{description}

 \item[{\textbf{(a)}}]
$i_{\overline{\eta}}\,\ell=L, \quad  i_{\overline{\eta}}\,\phi=0, \quad  i_{\overline{\eta}}\,\hbar=0.$

 \item[{\textbf{(b)}}]
$\stackrel{1}{D^{\circ}}L=0, \quad \stackrel{1}{D^{\circ}}\ell=0.$

 \item[{\textbf{(c)}}]
$\stackrel{2}{D^{\circ}}L=\ell, \quad
\stackrel{2}{D^{\circ}}\ell=L^{-1}\hbar.$

\item[{\textbf{(d)}}]
$\stackrel{2}{D^{\circ}}\phi=-L^{-2}\set{\hbar\otimes \overline{\eta}+L\phi
\otimes \ell}.$

 \item[{\textbf{(e)}}]$\mathcal{P}\cdot \ell=0 ,  \quad \mathcal{P}\cdot \hbar=\hbar.$
\end{description}
\end{lem}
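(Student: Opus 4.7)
My plan is to verify the five parts in the order (a)/(e), (c)/(b), (d), using only the definitions together with Theorem 1.1 and the regularity identities $\stackrel{1}{D^{\circ}}\overline{\eta} = K\circ\beta = 0$ and $\stackrel{2}{D^{\circ}}\overline{\eta} = K\circ\gamma = \mathrm{id}_{\pi^{-1}(TM)}$.

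Parts (a) and (e) are purely algebraic. In (a), I substitute $\overline{\eta}$ into each defining formula and use $g(\overline{\eta},\overline{\eta}) = L^{2}$: this gives $i_{\overline{\eta}}\ell = L^{-1}g(\overline{\eta},\overline{\eta}) = L$, then $i_{\overline{\eta}}\phi = \overline{\eta} - L^{-1}L\,\overline{\eta} = 0$, and finally $i_{\overline{\eta}}\hbar(\cdot) = g(\phi(\overline{\eta}),\cdot) = 0$. For (e), the one-line observation $\ell(\phi(\overline{X})) = \ell(\overline{X}) - L^{-1}\ell(\overline{X})\,\ell(\overline{\eta}) = 0$ simultaneously establishes $\mathcal{P}\cdot\ell = 0$ and the idempotency $\phi\circ\phi = \phi$; then $\hbar(\phi\overline{X},\phi\overline{Y}) = g(\phi\overline{X},\phi\overline{Y}) = g(\phi\overline{X},\overline{Y})$, because the correction term $L^{-1}\ell(\overline{Y})\,g(\phi\overline{X},\overline{\eta}) = L^{-1}\ell(\overline{Y})\cdot L\ell(\phi\overline{X})$ vanishes.

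Parts (b) and (c) are the analytic heart of the lemma. The identity $\stackrel{1}{D^{\circ}}L = 0$ is Theorem 1.1(a) verbatim. To derive $\stackrel{2}{D^{\circ}}L = \ell$, I apply Leibniz to $L^{2} = g(\overline{\eta},\overline{\eta})$ vertically: the LHS is $2L\stackrel{2}{D^{\circ}}L$, and the RHS reduces to $2g(\overline{\eta},\cdot) = 2L\ell$ once the $\stackrel{2}{D^{\circ}}g$ contribution is shown to contract to zero with $\overline{\eta}$ (Cartan-type indicatoriness). The remaining identities $\stackrel{1}{D^{\circ}}\ell = 0$ and $\stackrel{2}{D^{\circ}}\ell = L^{-1}\hbar$ are then obtained by differentiating $L\ell = i_{\overline{\eta}}g$ horizontally and vertically, using the $\overline{\eta}$-contraction vanishing of $\stackrel{1}{D^{\circ}}g$ (Landsberg-type) in the horizontal case.

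Part (d) is a Leibniz expansion of $(\stackrel{2}{D^{\circ}}\phi)(\overline{X},\overline{Y}) = (D^{\circ}_{\gamma\overline{X}}\phi)(\overline{Y})$ starting from $\phi(\overline{Y}) = \overline{Y} - L^{-1}\ell(\overline{Y})\overline{\eta}$; substituting $\stackrel{2}{D^{\circ}}L^{-1} = -L^{-2}\ell$, $\stackrel{2}{D^{\circ}}\ell = L^{-1}\hbar$, and $\stackrel{2}{D^{\circ}}\overline{\eta} = \mathrm{id}$ produces three terms that regroup, via $\overline{X} = \phi(\overline{X}) + L^{-1}\ell(\overline{X})\overline{\eta}$, into the stated $-L^{-2}\{\hbar\otimes\overline{\eta} + L\phi\otimes\ell\}$. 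The main conceptual obstacle for the whole lemma is the pair of $\overline{\eta}$-contraction-vanishing facts for $\stackrel{1}{D^{\circ}}g$ and $\stackrel{2}{D^{\circ}}g$ invoked in (b)/(c); these are standard consequences of the $2$-homogeneity of $L^{2}$ together with the Berwald axioms but are not spelled out in the preliminaries. Every other step is a one-line Leibniz manipulation.
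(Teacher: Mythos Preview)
Your proposal is correct. The paper itself offers no proof of this lemma --- it merely asserts that it (together with the two lemmas that follow) ``can easily be proved'' --- so your sketch is strictly more detailed than what the paper supplies. Your isolation of the two $\overline{\eta}$-contraction identities $(\stackrel{1}{D^{\circ}}g)(\,\cdot\,,\overline{\eta},\,\cdot\,)=0$ and $(\stackrel{2}{D^{\circ}}g)(\,\cdot\,,\overline{\eta},\,\cdot\,)=0$ as the only non-formal inputs is accurate; these are precisely the indicatoriness of the Landsberg and Cartan tensors, standard facts for the Berwald connection established in the references \cite{r92,r96} the paper relies on. One small terminological point: $D^{\circ}_{\gamma\overline{X}}\overline{\eta}=\overline{X}$ is not a consequence of regularity alone but of the specific vertical coefficients of the Berwald connection (they vanish), though the conclusion you use is correct.
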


\begin{lem}\label{lem.2} The $\pi$-scalar form $C$, defined by (\ref{C}),  has the following properties:
\begin{description}
    \item[{\textbf{(a)}}]
    $i_{\overline{\eta}}\,C=0$

   \item[{\textbf{(b)}}]
   $\mathcal{P}\cdot C=C$ ($C$ is indicatory)

  \item[{\textbf{(c)}}]
  $(\stackrel{2}{D^{\circ}}C)(\overline{\eta}, \overline{X})=0$ ($C$ is $h(0)$),

   \item[{\textbf{(d)}}]
   $(\stackrel{2}{D^{\circ}}C)(\overline{X},\overline{\eta})=-C(\overline{X}).$
\end{description}
\end{lem}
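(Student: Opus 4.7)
The plan is to derive all four items directly from the definition $C = L \cdot \stackrel{2}{D^{\circ}} k$, the basic identities of Lemma \ref{lem.1}, and the hypothesis that $k$ is $h(0)$ (which by the footnote amounts to $D^{\circ}_{\gamma\overline{\eta}} k = 0$, equivalently $(\stackrel{2}{D^{\circ}} k)(\overline{\eta}) = 0$).

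Part (a) is immediate on evaluating the defining formula at $\overline{\eta}$: $i_{\overline{\eta}} C = C(\overline{\eta}) = L\,(\stackrel{2}{D^{\circ}} k)(\overline{\eta}) = 0$. Part (b) then follows at once from Definition \ref{ind.}(a) applied to the type-$(0,1)$ tensor $C$, since $(\mathcal{P}\cdot C)(\overline{X}) = C(\phi(\overline{X})) = C(\overline{X}) - L^{-1}\ell(\overline{X})\,C(\overline{\eta}) = C(\overline{X})$ by (a).

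For (c) and (d) I expand $\stackrel{2}{D^{\circ}} C$ by the Leibniz rule, invoking Lemma \ref{lem.1}(c) in the form $\stackrel{2}{D^{\circ}}L = \ell$:
\[
(\stackrel{2}{D^{\circ}} C)(\overline{X}, \overline{Y}) \;=\; \ell(\overline{X})\,(\stackrel{2}{D^{\circ}} k)(\overline{Y}) \;+\; L\,(\stackrel{2}{D^{\circ}} \stackrel{2}{D^{\circ}} k)(\overline{X}, \overline{Y}).
\]
Setting $\overline{Y} = \overline{\eta}$ yields (d): the first summand vanishes by (a), and the second reduces via the connection-map identity $D^{\circ}_{\gamma\overline{X}} \overline{\eta} = \overline{X}$ (i.e.\ $K^{\circ}\circ\gamma = \mathrm{id}$) together with $(\stackrel{2}{D^{\circ}} k)(\overline{\eta})=0$ to $-L(\stackrel{2}{D^{\circ}} k)(\overline{X}) = -C(\overline{X})$. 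Setting $\overline{X} = \overline{\eta}$ instead and using Lemma \ref{lem.1}(a) turns the first summand into $L(\stackrel{2}{D^{\circ}} k)(\overline{Y})$, so the target statement $(\stackrel{2}{D^{\circ}} C)(\overline{\eta}, \overline{Y}) = 0$ of (c) reduces to the single identity $(\stackrel{2}{D^{\circ}} \stackrel{2}{D^{\circ}} k)(\overline{\eta}, \overline{Y}) = -(\stackrel{2}{D^{\circ}} k)(\overline{Y})$.

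The main obstacle is establishing this last identity, which is precisely the homogeneity statement that $\stackrel{2}{D^{\circ}} k$ is $h(-1)$ whenever $k$ is $h(0)$. I would verify it by exploiting two features of the Berwald connection encoded in Theorem \ref{th.1a}: torsion-freeness together with the vanishing action of $D^{\circ}_{\gamma\overline{X}}$ on $\pi$-basis fields render the second vertical covariant derivative of any scalar symmetric, i.e.\ $(\stackrel{2}{D^{\circ}} \stackrel{2}{D^{\circ}} k)(\overline{\eta},\overline{Y}) = (\stackrel{2}{D^{\circ}} \stackrel{2}{D^{\circ}} k)(\overline{Y},\overline{\eta})$. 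Repeating the computation carried out for (d), but with the roles of $\overline{X}$ and $\overline{Y}$ swapped, then gives $(\stackrel{2}{D^{\circ}} \stackrel{2}{D^{\circ}} k)(\overline{Y},\overline{\eta}) = -(\stackrel{2}{D^{\circ}} k)(\overline{Y})$, closing the argument.
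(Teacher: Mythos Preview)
The paper does not actually prove Lemma~\ref{lem.2}; it is one of the three lemmas announced as ``easily proved'' before Theorem~\ref{thm.1}. Your argument supplies precisely the expected details and is correct.

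Two small remarks. First, in part~(b) the reference should be to Definition~\ref{ind.}(b) rather than (a), since $C$ is a $\pi$-tensor of type $(0,1)$; the formula you write, $(\mathcal{P}\cdot C)(\overline{X})=C(\phi(\overline{X}))$, is in any case the right one. Second, the symmetry $(\stackrel{2}{D^{\circ}}\stackrel{2}{D^{\circ}}k)(\overline{X},\overline{Y})=(\stackrel{2}{D^{\circ}}\stackrel{2}{D^{\circ}}k)(\overline{Y},\overline{X})$ is better attributed to the vanishing of the $v$-curvature $S^{\circ}$ of the Berwald connection (equivalently, to the vanishing of its vertical coefficients) than to ``torsion-freeness''; Theorem~\ref{th.1a}(b) records only $Q^{\circ}=T^{\circ}=0$, which concern the horizontal and mixed torsions. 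The paper itself invokes the analogous symmetry for $1$-forms in~(\ref{eq.7}) without proof, so your level of justification matches the paper's conventions.
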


\begin{lem}\label{lem.3} The $\pi$-scalar form $B$, defined by (\ref{C}),  has the following properties:
\begin{description}
    \item[{\textbf{(a)}}]
    $i_{\overline{\eta}}\,B=0$

   \item[{\textbf{(b)}}]
   $\mathcal{P}\cdot B=B$ ($B$ is indicatory)

  \item[{\textbf{(c)}}]
  $(\stackrel{2}{D^{\circ}}B)(\overline{\eta}, \overline{X},\overline{Y})=0$ \,($B$ is $h(0)$),

\item[{\textbf{(d)}}]
  $B(\overline{X},\overline{Y})=L(\stackrel{2}{D^{\circ}}C)(\overline{X}, \overline{Y})+C(\overline{X})\ell(\overline{Y}),$

   \item[{\textbf{(e)}}]$B$ is  symmetric.

    \item[{\textbf{(f)}}]
   $(\stackrel{2}{D^{\circ}}B)(\overline{X},\overline{\eta},\overline{Y})=(\stackrel{2}{D^{\circ}}B)(\overline{X},\overline{Y}, \overline{\eta})=-B(\overline{X}, \overline{Y}).$

\end{description}
\end{lem}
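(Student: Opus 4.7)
The plan is to prove item (d) first, since it converts the defining formula $B=L(\mathcal{P}\cdot \stackrel{2}{D^{\circ}}C)$ into an expression in which the projection $\mathcal{P}$ has been fully evaluated, and then to derive all remaining items from (d) together with the identities in Lemmas \ref{lem.1} and \ref{lem.2} and torsion-freeness of $D^{\circ}$.

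For (d), I would unfold $(\mathcal{P}\cdot \stackrel{2}{D^{\circ}}C)(\overline{X},\overline{Y})=(\stackrel{2}{D^{\circ}}C)(\phi(\overline{X}),\phi(\overline{Y}))$, substitute $\phi(\overline{Z})=\overline{Z}-L^{-1}\ell(\overline{Z})\overline{\eta}$ in each slot, and expand bilinearly. Of the four resulting terms, three collapse: the $(\overline{\eta},\cdot)$ terms vanish by Lemma~\ref{lem.2}(c), while the $(\cdot,\overline{\eta})$ term is $-C(\overline{X})$ by Lemma~\ref{lem.2}(d). Multiplying through by $L$ yields exactly $L(\stackrel{2}{D^{\circ}}C)(\overline{X},\overline{Y})+C(\overline{X})\ell(\overline{Y})$. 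With (d) in hand, (a) is immediate: set $\overline{X}=\overline{\eta}$ and use Lemma~\ref{lem.2}(a),(c) together with $\ell(\overline{\eta})=L$. Item (b) follows either from the idempotence $\phi\circ\phi=\phi$ applied to the projection in the definition, or directly by substituting $\phi(\overline{X}),\phi(\overline{Y})$ into (d) and invoking Lemma~\ref{lem.2}(b) and $\ell(\phi(\overline{Y}))=0$.

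The symmetry (e) is the only non-routine step. Applying Leibniz to $C=L\stackrel{2}{D^{\circ}}k$ and using Lemma~\ref{lem.1}(c) gives
\[
(\stackrel{2}{D^{\circ}}C)(\overline{X},\overline{Y})=L^{-1}\ell(\overline{X})C(\overline{Y})+L\,((\stackrel{2}{D^{\circ}})^{2}k)(\overline{X},\overline{Y}).
\]
The antisymmetric part of the vertical Hessian equals $[\gamma\overline{X},\gamma\overline{Y}](k)-\gamma(D^{\circ}_{\gamma\overline{X}}\overline{Y}-D^{\circ}_{\gamma\overline{Y}}\overline{X})(k)$, which vanishes because torsion-freeness of $D^{\circ}$ (Theorem~\ref{th.1a}(b)), combined with $\rho\circ\gamma=0$, forces $[\gamma\overline{X},\gamma\overline{Y}]=\gamma(D^{\circ}_{\gamma\overline{X}}\overline{Y}-D^{\circ}_{\gamma\overline{Y}}\overline{X})$. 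Substituting back into (d), the antisymmetric contribution $\ell(\overline{X})C(\overline{Y})-\ell(\overline{Y})C(\overline{X})$ from the Leibniz piece is exactly cancelled by $C(\overline{X})\ell(\overline{Y})-C(\overline{Y})\ell(\overline{X})$ from $C\otimes\ell$, giving $B(\overline{X},\overline{Y})=B(\overline{Y},\overline{X})$.

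Finally, (c) and (f) are short Leibniz computations built on (d). For (c), applying $D^{\circ}_{\gamma\overline{\eta}}$ to (d) and using $\stackrel{2}{D^{\circ}}L=\ell$ with $\ell(\overline{\eta})=L$, $\stackrel{2}{D^{\circ}}\ell=L^{-1}\hbar$ with $i_{\overline{\eta}}\hbar=0$ (Lemma~\ref{lem.1}(a),(c)), and Lemma~\ref{lem.2}(c), shows $B$ is $h(0)$. For (f), I expand $(D^{\circ}_{\gamma\overline{X}}B)(\overline{\eta},\overline{Y})$ by Leibniz; the terms $\gamma\overline{X}(B(\overline{\eta},\overline{Y}))$ and $B(\overline{\eta},D^{\circ}_{\gamma\overline{X}}\overline{Y})$ both vanish by (a), leaving only $-B(D^{\circ}_{\gamma\overline{X}}\overline{\eta},\overline{Y})=-B(\overline{X},\overline{Y})$ via the defining property $D^{\circ}_{\gamma\overline{X}}\overline{\eta}=\overline{X}$ of the connection map; the second identity in (f) then follows from the first using symmetry (e). The main obstacle is the symmetry of the vertical Hessian in step~(e); everything else is bookkeeping with the preparatory lemmas.
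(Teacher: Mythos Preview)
Your proof is correct and complete; the paper itself omits the proof of this lemma, merely stating that it ``can easily be proved,'' so there is no approach to compare against. Your strategy of establishing (d) first and deriving everything else from it is clean and efficient.

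One small technical point in your argument for (e): torsion-freeness of $D^{\circ}$ together with $\rho\circ\gamma=0$ gives only $\rho[\gamma\overline{X},\gamma\overline{Y}]=0$, i.e.\ that the bracket is vertical. To conclude that $[\gamma\overline{X},\gamma\overline{Y}]=\gamma(D^{\circ}_{\gamma\overline{X}}\overline{Y}-D^{\circ}_{\gamma\overline{Y}}\overline{X})$ you also need that $\widehat{S^{\circ}}=0$ (equivalently $S^{\circ}=0$) for the Berwald connection: applying $K$ to the bracket and using the definition of the $v$-curvature gives $K([\gamma\overline{X},\gamma\overline{Y}])=D^{\circ}_{\gamma\overline{X}}\overline{Y}-D^{\circ}_{\gamma\overline{Y}}\overline{X}+\widehat{S^{\circ}}(\overline{X},\overline{Y})$, and then $\gamma\circ K=\mathrm{id}$ on vertical vectors finishes it. This is a standard property of $D^{\circ}$ (and the paper itself invokes the analogous identity \eqref{eq.7} for $1$-forms without proof later on), so the gap is cosmetic rather than substantive.
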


\bigskip

\noindent\textit{\textbf{Proof of Theorem \ref{thm.1}}}:
\par
\vspace{5pt}
$\textbf{(a)}\Rightarrow\textbf{(b)}$: Let $(M,L)$ be a Finsler
manifold of scaler curvature $k$. Then, by Definition \ref{sca.},
the deviation tensor $H$ has the form
\begin{equation}\label{eq.1}
    H(\overline{X})=
  k L^{2} \phi(\overline{X}).
\end{equation}
On the other hand,  from Theorem 4.6 of \cite{r96},  we have
\begin{equation}\label{eq.2}
   \widehat{ R^{\circ}}({\overline{X},\overline{Y}})=\frac{1}{3}\mathfrak{A}_{\overline{X},\overline{Y}}\set{(\stackrel{2}{D^{\circ}}H)(\overline{X},\overline{Y})}.
\end{equation}
From which, together with (\ref{eq.1}) and  Lemma \ref{lem.1}, we
get
\begin{eqnarray*}
  \widehat{R^{\circ}}(\overline{X},\overline{Y} )
  &=&\frac{1}{3}\,\mathfrak{A}_{\overline{X},\overline{Y}}\set{[L^{2}(\stackrel{2}{D^{\circ}}k)\otimes \phi
  +k (\stackrel{2}{D^{\circ}}L^{2})\otimes \phi+k L^{2} (\stackrel{2}{D^{\circ}}\phi)](\overline{X},\overline{Y})}  \\
   &=&\frac{1}{3}\,\emph{}\mathfrak{A}_{\overline{X},\overline{Y}}\set{[LC\otimes \phi
  +2Lk \ell \otimes \phi-k \set{\hbar\otimes \overline{\eta}+L\phi
\otimes \ell}](\overline{X},\overline{Y})}  \\
   &=&\frac{1}{3}\,\mathfrak{A}_{\overline{X},\overline{Y}}\set{[LC\otimes \phi
  +3Lk \ell \otimes \phi-k \hbar\otimes \overline{\eta}](\overline{X},\overline{Y})}.
\end{eqnarray*}
Hence, the result follows.

\par
\vspace{5pt}
$\textbf{(b)}\Rightarrow\textbf{(c)}$: Suppose that the
$(v)h$-torsion tensor $\widehat{R^{\circ}}$ satisfies (\ref{(v)h-tors}):
\begin{equation}\label{eq.3}
    \widehat{R^{\circ}}(\overline{X},\overline{Y})=\mathfrak{A}_{\overline{X},\overline{Y}}\{L\phi(\overline{Y})[k\ell(\overline{X})
    +\frac{1}{3}C(\overline{X})]\}.
\end{equation}
In view of Theorem 4.6 of \cite{r96},  we have
\begin{equation*}
  R^{\circ}({\overline{X},\overline{Y}})\overline{Z}=(\stackrel{2}{D^{\circ}}\widehat{R^{\circ}})(\overline{Z},\overline{X},\overline{Y}).
\end{equation*}
From which, taking into account (\ref{eq.3}) and Lemmas
\ref{lem.1}, \ref{lem.2} and \ref{lem.3}, the result follows.

\par
\vspace{5pt}
$\textbf{(c)}\Rightarrow\textbf{(a)}$: Suppose that the
$h$-curvature tensor  ${R^{\circ}}$ has the form (\ref{h-curv}):
\begin{eqnarray*}
   {R^{\circ}}(\overline{X},\overline{Y})\overline{Z}&=& \mathfrak{A}_{\overline{X},\overline{Y}}\{\phi(\overline{Y})\{[k\ell(\overline{X})+\frac{1}{3}C(\overline{X})]
    +\frac{1}{3}B(\overline{Z},\overline{X})\\
   && + \frac{2}{3}\ell(\overline{X})C(\overline{Z})+k\hbar(\overline{Z},\overline{X})\}+\frac{1}{3}\ell(\overline{X})C(\overline{Y})\phi(\overline{Z})\\
   &&+L^{-1}\hbar(\overline{X},\overline{Z})\overline{\eta}\,
   [k\ell(\overline{Y})+\frac{1}{3}C(\overline{Y})]\},
\end{eqnarray*}
Setting $\overline{X}=\overline{\eta}$ and $\overline{Z}=\overline{\eta}$ into the above equation,
taking into account  Lemmas \ref{lem.1}, \ref{lem.2} and \ref{lem.3}, the result follows.
$\qquad\qquad\qquad\qquad\qquad\quad\qquad\qquad\qquad\qquad\qquad\quad\qquad\qquad\square$

\bigskip
\vspace{5pt}
Let \,$R^\circ(\overline{X},\overline{Y},\overline{Z},\overline{W}):=g(R^\circ(\overline{X},\overline{Y})\overline{Z},\overline{W})$, then we have:
\begin{cor}\label{cor.1} For a Finsler manifold of scalar curvature $k$, the
$h$-curvature tensor  ${R^{\circ}}$ satisfies:
\begin{description}
  \item[(a)]  ${R^{\circ}}(\overline{X},\overline{Y},\overline{Z}, \overline{W})-{R^{\circ}}(\overline{X},\overline{Y},\overline{W}, \overline{Z})= \mathfrak{A}_{\overline{X},\overline{Y}}\{\hbar(\overline{Z}, \overline{X})N(\overline{W}, \overline{Y})
  +\hbar(\overline{W}, \overline{Y})N(\overline{Z}, \overline{X})\}.$

  \item[(b)] ${R^{\circ}}(\overline{X},\overline{Y},\overline{Z}, \overline{W})+{R^{\circ}}(\overline{X},\overline{Y},\overline{W}, \overline{Z})=
   \mathfrak{A}_{\overline{X},\overline{Y}}\{\hbar(\overline{W}, \overline{Y})F(\overline{Z}, \overline{X})+\hbar(\overline{Z}, \overline{Y})F(\overline{W},    \overline{X})$\\
  ${\qquad\qquad\qquad\qquad\qquad\qquad\qquad\qquad }+\hbar(\overline{W}, \overline{Z})F(\overline{Y}, \overline{X})\}.$
\end{description}
where $N$ and $F$ are the $\pi$-tensor fields of type $(0,2)$
defined respectively by
\begin{eqnarray*}
   N(\overline{X},\overline{Y}):&=&k\set{g(\overline{X},\overline{Y})+\ell(\overline{X})\ell(\overline{Y})}\nonumber\\
   &&+\frac{1}{3}\set{B(\overline{X},\overline{Y})+2\ell(\overline{X})C(\overline{Y})+2C(\overline{X})\ell(\overline{Y})},\label{eq.4}\\
   F(\overline{X},\overline{Y})&:=&\frac{1}{3}\set{B(\overline{X},\overline{Y})+2C(\overline{X})\ell(\overline{Y})}\label{eq.5}.
   \end{eqnarray*}
\end{cor}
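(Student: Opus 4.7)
The strategy is to start from the explicit formula (\ref{h-curv}) for $R^\circ(\overline{X},\overline{Y})\overline{Z}$ provided by Theorem \ref{thm.1}(c) and pair it with $\overline{W}$ in the Finsler metric $g$; both (a) and (b) will then follow by forming, respectively, the difference and the sum of the resulting scalar with its $\overline{Z}\leftrightarrow\overline{W}$ swap. Two substitutions convert the vector identity to a scalar one: $g(\phi(\overline{U}),\overline{W}) = \hbar(\overline{U},\overline{W})$, from the very definition of $\hbar$, and $g(\overline{\eta},\overline{W}) = L\ell(\overline{W})$, from the definition of $\ell$, which cancels the $L^{-1}$ in the $\overline{\eta}$-term of (\ref{h-curv}). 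This yields an expression for $R^\circ(\overline{X},\overline{Y},\overline{Z},\overline{W})$ as a sum of products of $\hbar$, $\ell$, $C$, $B$, $k\hbar$ and the auxiliary scalar $\alpha(\overline{X}) := k\ell(\overline{X}) + \tfrac{1}{3}C(\overline{X})$.

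For part (a), I form $R^\circ(\overline{X},\overline{Y},\overline{Z},\overline{W}) - R^\circ(\overline{X},\overline{Y},\overline{W},\overline{Z})$. The summand $\tfrac{1}{3}\ell(\overline{X})C(\overline{Y})\hbar(\overline{Z},\overline{W})$ is symmetric in $\overline{Z},\overline{W}$ (by symmetry of $\hbar$) and drops out. The remaining pieces reorganize after the outer antisymmetrization $\mathfrak{A}_{\overline{X},\overline{Y}}$ into the claimed expression: the key point is that the combination $k\hbar(\overline{Z},\overline{X}) + \tfrac{1}{3}B(\overline{Z},\overline{X}) + \tfrac{2}{3}\ell(\overline{X})C(\overline{Z})$, when assembled with the residual $\alpha$-contributions and the $\ell(\overline{W})\alpha(\overline{Y})\hbar(\overline{X},\overline{Z})$ piece from (\ref{h-curv}), repackages into $N(\overline{Z},\overline{X})$ using the symmetry of $B$ provided by Lemma \ref{lem.3}(e).

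For part (b), I form the sum $R^\circ(\overline{X},\overline{Y},\overline{Z},\overline{W}) + R^\circ(\overline{X},\overline{Y},\overline{W},\overline{Z})$: the $k\hbar\cdot\hbar$ pieces vanish under $\mathfrak{A}_{\overline{X},\overline{Y}}$ by $\hbar$-symmetry, and the $\alpha$-contributions from the two different positions in (\ref{h-curv}) cancel pairwise after antisymmetrization. The surviving $\tfrac{1}{3}B + \tfrac{2}{3}\ell C$ combinations produce the four $\hbar\cdot F$ products of the claim, while the doubled residue $\tfrac{2}{3}\ell(\overline{X})C(\overline{Y})\hbar(\overline{Z},\overline{W})$ supplies the extra $\hbar(\overline{W},\overline{Z})F(\overline{Y},\overline{X})$ summand, via the identity $F(\overline{Y},\overline{X}) - F(\overline{X},\overline{Y}) = \tfrac{2}{3}[\ell(\overline{X})C(\overline{Y}) - \ell(\overline{Y})C(\overline{X})]$ (again from $B$-symmetry). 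The principal obstacle is the algebraic bookkeeping rather than any conceptual step: each summand of (\ref{h-curv}) spawns several scalar products under the $g$-pairing, and under the compound operation $\mathfrak{A}_{\overline{X},\overline{Y}}$ together with the $\overline{Z}\leftrightarrow\overline{W}$ swap one must track them carefully, relying throughout on the symmetries in Lemma \ref{lem.1} and the symmetry of $B$ in Lemma \ref{lem.3} to recognize the combinations constituting $N$ and $F$.
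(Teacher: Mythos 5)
Your proposal is correct and coincides with the paper's (entirely implicit) derivation: the corollary is stated without proof as a direct consequence of Theorem \ref{thm.1}(c), obtained exactly as you describe by pairing (\ref{h-curv}) with $\overline{W}$ via $g$, using $g(\phi(\overline{U}),\overline{W})=\hbar(\overline{U},\overline{W})$ and $g(\overline{\eta},\overline{W})=L\ell(\overline{W})$, and then taking the difference and sum under the $\overline{Z}\leftrightarrow\overline{W}$ swap, with the reassembly into $N$ and $F$ resting on the symmetry of $\hbar$ and of $B$. The only caveat is that the bookkeeping closes (and the stated $N$ emerges, via $N(\overline{Z},\overline{X})=\ell(\overline{Z})[k\ell(\overline{X})+\tfrac13 C(\overline{X})]+\tfrac13 B(\overline{Z},\overline{X})+\tfrac23\ell(\overline{X})C(\overline{Z})+k\hbar(\overline{Z},\overline{X})+\ell(\overline{Z})[k\ell(\overline{X})+\tfrac13 C(\overline{X})]$) only if the first summand of (\ref{h-curv}) carries the factor $\ell(\overline{Z})$ that is forced by consistency with (\ref{(v)h-tors}) under $\overline{Z}=\overline{\eta}$ but is missing in the printed formula --- a typo which your computation implicitly corrects.
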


\bigskip

We end this section by the following result.
\begin{prop}\label{lem.4} For a Finsler manifold of scalar curvature, $\mathcal{P}\cdot R^{\circ}$ vanishes  if and only if\,
$\mathcal{P}\cdot N$ vanishes, where $N$  is the $\pi$-form defined
by Corollary \ref{cor.1}.
\end{prop}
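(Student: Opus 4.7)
My approach is to push the projection operator $\mathcal{P}$ through the explicit formula (\ref{h-curv}) for $R^{\circ}$ and recognise the resulting expression as a multiple of $\mathcal{P}\cdot N$. Four elementary facts drive every simplification: $\ell\circ\phi=0$ (since $\ell(\phi(\overline{X}))=\ell(\overline{X})-L^{-1}\ell(\overline{X})L=0$), $\phi(\overline{\eta})=0$, $\phi\circ\phi=\phi$, and the indicatory character of $\hbar$, $B$, $C$ (Lemmas \ref{lem.2}(b), \ref{lem.3}(b), Remark \ref{rem.1}).

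First I would compute $(\mathcal{P}\cdot R^{\circ})(\overline{X},\overline{Y})\overline{Z} = \phi\bigl(R^{\circ}(\phi(\overline{X}),\phi(\overline{Y}))\phi(\overline{Z})\bigr)$ term by term from (\ref{h-curv}). Every summand carrying a factor $\ell(\overline{X})$, $\ell(\overline{Y})$ or $\ell(\overline{Z})$ is annihilated by $\ell\circ\phi=0$, and the $\overline{\eta}$-valued summand is killed by the outer $\phi$. The surviving terms collapse to
\begin{equation*}
(\mathcal{P}\cdot R^{\circ})(\overline{X},\overline{Y})\overline{Z} = \mathfrak{A}_{\overline{X},\overline{Y}}\{\phi(\overline{Y})[k\hbar(\overline{Z},\overline{X})+\tfrac{1}{3}B(\overline{Z},\overline{X})]\}.
\end{equation*}
Next I would compute $\mathcal{P}\cdot N$ from its definition in Corollary \ref{cor.1}: using $g(\phi(\overline{X}),\phi(\overline{Y}))=\hbar(\overline{X},\overline{Y})$, $\ell\circ\phi=0$ and the indicatory property of $B$, one obtains $(\mathcal{P}\cdot N)(\overline{X},\overline{Y}) = k\hbar(\overline{X},\overline{Y})+\tfrac{1}{3}B(\overline{X},\overline{Y})$, so that
\begin{equation*}
(\mathcal{P}\cdot R^{\circ})(\overline{X},\overline{Y})\overline{Z} = \mathfrak{A}_{\overline{X},\overline{Y}}\{\phi(\overline{Y})\,(\mathcal{P}\cdot N)(\overline{X},\overline{Z})\}.
\end{equation*}

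The implication $\mathcal{P}\cdot N=0 \Rightarrow \mathcal{P}\cdot R^{\circ}=0$ is then transparent. For the converse, I fix $\overline{Z}$ and set $\alpha(\overline{X}) := (\mathcal{P}\cdot N)(\overline{X},\overline{Z})$; vanishing of $\mathcal{P}\cdot R^{\circ}$ reads $\phi(\overline{Y})\alpha(\overline{X}) = \phi(\overline{X})\alpha(\overline{Y})$. For any $\overline{Y}$ with $\phi(\overline{Y})\neq 0$, I exploit the hypothesis $n\geq 3$ to pick $\overline{X}$ whose projection $\phi(\overline{X})$ is linearly independent from $\phi(\overline{Y})$ (the image of $\phi$ is $(n-1)$-dimensional); linear independence of $\phi(\overline{X})$ and $\phi(\overline{Y})$ forces $\alpha(\overline{Y})=0$. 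The residual case $\phi(\overline{Y})=0$, i.e.\ $\overline{Y}\parallel\overline{\eta}$, is handled by the indicatory character of $\mathcal{P}\cdot N$, which gives $\alpha(\overline{\eta})=N(\phi(\overline{Z}),\phi(\overline{\eta}))=0$. Hence $\mathcal{P}\cdot N\equiv 0$.

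The main obstacle is really the patient bookkeeping required when applying $\mathcal{P}$ to each of the subterms of (\ref{h-curv}) — one must carefully track which $\ell$-factors kill which terms and which factors pass through by indicatoriness. The dimension hypothesis $n\geq 3$ plays its essential role only in the converse, where it guarantees that $\phi$ has a sufficiently large image for the linear-independence argument.
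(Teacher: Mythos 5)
Your proof is correct and follows essentially the same route as the paper: both reduce the statement to the identity $(\mathcal{P}\cdot R^{\circ})(\overline{X},\overline{Y})\overline{Z}=\mathfrak{A}_{\overline{X},\overline{Y}}\{\phi(\overline{Y})\,(\mathcal{P}\cdot N)(\overline{Z},\overline{X})\}$, which is exactly the paper's equation (\ref{eq.24}). The only divergence is in the last step of the converse, where the paper contracts the trace over $\overline{Y}$ to obtain $(n-2)\,(\mathcal{P}\cdot N)=0$ while you argue via linear independence of $\phi(\overline{X})$ and $\phi(\overline{Y})$; both exploit $n\geq 3$ in the same essential way.
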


\begin{proof} Let $(M,L)$ be a Finsler manifold of scalar curvature. Then, by Theorem \ref{thm.1}, we have
\begin{equation}\label{eq.23}
    ({\mathcal{P} \cdot R^{\circ}})(\overline{X},\overline{Y},\overline{Z}, \overline{W})=\mathfrak{A}_{\overline{X},    \overline{Y}}\{\hbar(\overline{Y},\overline{W})[\frac{1}{3}B(\overline{Z},\overline{X})+k\hbar(\overline{Z},\overline{X})]\}.
\end{equation}
On the other hand, by Corollary \ref{cor.1}, we obtain
\begin{equation*}
    (\mathcal{P} \cdot N)(\overline{X},\overline{Y})=\frac{1}{3}B(\overline{Z},\overline{X})+k\hbar(\overline{Z},\overline{X}).
\end{equation*}
From which together with (\ref{eq.23}), we get
\begin{equation}\label{eq.24}
    (\mathcal{P} \cdot R^{\circ})(\overline{X},\overline{Y},\overline{Z})=\mathfrak{A}_{\overline{X},\overline{Y}}\{\phi(\overline{Y})(\mathcal{P} \cdot N)(\overline{Z}, \overline{X})\}.
\end{equation}
It is clear, from (\ref{eq.24}), that if $\mathcal{P}\cdot N=0$, then $\mathcal{P}\cdot R^{\circ}=0.$\\
Conversely, if $\mathcal{P}\cdot R^{\circ}=0$, it follows again from (\ref{eq.24}) that
$$\mathfrak{A}_{\overline{X},\overline{Y}}\{\phi(\overline{Y})(\mathcal{P} \cdot N)(\overline{Z},\overline{X})\}=0.$$
Taking the contracted trace with respect to $\overline{Y}$, the above
relation reduces to
$$(n-2)(\mathcal{P} \cdot N)(\overline{Z}, \overline{X})=0.$$
Consequently, as $n\geq3$, $\mathcal{P}\cdot N$ vanishes.
\end{proof}


\Section{Finsler Space of Constant Curvature}

In this section, we investigate intrinsically necessary and sufficient
conditions under which a Finsler manifold of scalar curvature reduces
to a Finsler manifold of constant curvature.

\vspace{4pt}
The following lemma is useful for subsequent
use.

\begin{lem}\label{pro.1} For a Finsler manifold $(M,L)$  of constant curvature, we have
$$\mathfrak{A}_{\overline{X},\overline{Y}}\set{A(\overline{X},\overline{Y},\overline{Z})+C(\overline{X})\hbar(\overline{Y}, \overline{Z})}=0,$$
where $A$ is the $\pi$-tensor field of type (0,3) defined
by
\begin{eqnarray*}
A(\overline{X},\overline{Y},\overline{Z})&:=&L(\mathcal{P} \cdot
\stackrel{2}{D^{\circ}}B)(\overline{X}, \overline{Y}, \overline{Z}).
\end{eqnarray*}
and $C$, $B$ are the $\pi$-tensor fields defined by (\ref{C}).
\end{lem}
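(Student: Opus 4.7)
The plan is to unwind the chain of definitions in~(\ref{C}) together with the definition of $A$, once the hypothesis of constant curvature is imposed.

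By Definition~\ref{sca.}, a Finsler manifold of constant curvature is one whose scalar function $k$ is constant on $\T M$; consequently every covariant derivative of $k$ vanishes, and in particular $\stackrel{2}{D^{\circ}}k=0$. Feeding this into the definitions in~(\ref{C}), the first step gives
$$C(\overline{X})=L(\stackrel{2}{D^{\circ}}k)(\overline{X})\equiv 0.$$
Once $C$ vanishes identically, the relation $B(\overline{X},\overline{Y})=L(\mathcal{P}\cdot\stackrel{2}{D^{\circ}}C)(\overline{X},\overline{Y})$ forces $B\equiv 0$ (the projection operator $\mathcal{P}$ preserves tensor type by Remark~\ref{rem.1}), and then $A(\overline{X},\overline{Y},\overline{Z}) := L(\mathcal{P}\cdot\stackrel{2}{D^{\circ}}B)(\overline{X},\overline{Y},\overline{Z})$ forces $A\equiv 0$ in turn.

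Since both summands inside the braces vanish identically, the antisymmetrization $\mathfrak{A}_{\overline{X},\overline{Y}}\set{A(\overline{X},\overline{Y},\overline{Z})+C(\overline{X})\hbar(\overline{Y},\overline{Z})}$ collapses to $0$, which is precisely the claim.

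I do not foresee any substantial obstacle. The only point that requires any care is the implicit observation that, for constant $k$, the homogeneity condition $h(0)$ is trivially satisfied and the two covariant derivatives $\stackrel{1}{D^{\circ}}k,\stackrel{2}{D^{\circ}}k$ both vanish; after that the cascade $C=0\Rightarrow B=0\Rightarrow A=0$ is automatic. The purpose of isolating this particular combination $A+C\otimes\hbar$ is presumably that it will reappear in the converse direction, where a scalar curvature space satisfying this identity is shown to be of constant curvature; the present lemma supplies the forward implication needed to assemble that characterization.
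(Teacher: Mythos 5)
Your argument is formally valid for the hypothesis as literally printed, but it proves only a vacuous special case and cannot serve as the proof of this lemma. The phrase ``of constant curvature'' in the statement is evidently a slip for ``of scalar curvature'': the lemma is invoked in the proof of Theorem \ref{thm.3} for a manifold that is only assumed to be of scalar curvature with $B=0$, precisely in order to deduce $\mathfrak{A}_{\overline{X},\overline{Y}}\set{C(\overline{X})\hbar(\overline{Y},\overline{Z})}=0$ and hence $C=0$, and from there (via Theorem \ref{thm.2}) that $k$ is constant. If the identity were available only after $k$ is already known to be constant, that application would be circular. Your own closing remark --- that the combination $A+C\otimes\hbar$ must ``reappear in the converse direction'' --- is exactly the reason the trivial cascade $C=0\Rightarrow B=0\Rightarrow A=0$ cannot be the content of the lemma: under your reading the conclusion carries no information, since each summand is separately zero.

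The paper's proof is a genuine computation that nowhere uses constancy of $k$. One expands $A=L(\mathcal{P}\cdot\stackrel{2}{D^{\circ}}B)$ by inserting $\phi$ into each slot and evaluating the terms containing $\overline{\eta}$ via Lemmas \ref{lem.1} and \ref{lem.3}, obtaining
$A(\overline{X},\overline{Y},\overline{Z})=L(\stackrel{2}{D^{\circ}}B)(\overline{X},\overline{Y},\overline{Z})+\ell(\overline{Z})B(\overline{X},\overline{Y})+\ell(\overline{Y})B(\overline{X},\overline{Z})$;
one then unwinds $\stackrel{2}{D^{\circ}}B$ through the definitions of $B$ in terms of $C$ and of $C$ in terms of $k$, arriving at
$A=\mathfrak{S}_{\overline{X},\overline{Y},\overline{Z}}\set{\ell(\overline{X})B(\overline{Y},\overline{Z})}+\hbar(\overline{X},\overline{Z})C(\overline{Y})+L^{2}\set{(\stackrel{2}{D^{\circ}}\stackrel{2}{D^{\circ}}C)(\overline{X},\overline{Y},\overline{Z})+\ell(\overline{Z})(\stackrel{2}{D^{\circ}}\stackrel{2}{D^{\circ}}k)(\overline{X},\overline{Y})}$.
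The cyclic sum is totally symmetric because $B$ is symmetric, and the last bracket is symmetric in $\overline{X},\overline{Y}$ because repeated vertical covariant derivatives commute (equation (\ref{eq.7})); antisymmetrizing in $\overline{X},\overline{Y}$ therefore leaves only $\mathfrak{A}_{\overline{X},\overline{Y}}\set{\hbar(\overline{X},\overline{Z})C(\overline{Y})}=-\mathfrak{A}_{\overline{X},\overline{Y}}\set{C(\overline{X})\hbar(\overline{Y},\overline{Z})}$, which is the assertion. You would need to carry out this computation (or an equivalent one) to obtain the version of the lemma that Section 3 actually relies on; the step that is genuinely missing from your proposal is the identification of the symmetric part of $\stackrel{2}{D^{\circ}}A$-type expressions, without which the nontrivial statement is out of reach.
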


\begin{proof}
We have
\begin{eqnarray*}
  A(\overline{X},\overline{Y},\overline{Z}) &=& L(\mathcal{P} \cdot
\stackrel{2}{D^{\circ}}B)(\overline{X}, \overline{Y}, \overline{Z}). \\
   &=& L(\stackrel{2}{D^{\circ}}B)(\phi(\overline{X}), \phi(\overline{Y}), \phi(\overline{Z})). \\
   &=&L(\stackrel{2}{D^{\circ}}B)(\overline{X}, \overline{Y}, \overline{Z})
   -\ell(\overline{Z})(\stackrel{2}{D^{\circ}}B)(\overline{X}, \overline{Y}, \overline{\eta})\\
   &&   -\ell(\overline{Y})(\stackrel{2}{D^{\circ}}B)(\overline{X}, \overline{\eta}, \overline{Z})
   +L^{-1}\ell(\overline{Y})\ell(\overline{Z}) (\stackrel{2}{D^{\circ}}B)(\overline{X}, \overline{\eta},\overline{\eta})\\
&&  -\ell(\overline{X})(\stackrel{2}{D^{\circ}}B)(\overline{\eta}, \overline{Y}, \overline{Z})
   +L^{-1}\ell(\overline{X})\ell(\overline{Z}) (\stackrel{2}{D^{\circ}}B)(\overline{\eta}, \overline{Y}, \overline{\eta})\\
   && +L^{-1}\ell(\overline{X})\ell(\overline{Y}) (\stackrel{2}{D^{\circ}}B)(\overline{\eta}, \overline{\eta},    \overline{Z})-L^{-2}\ell(\overline{X})\ell(\overline{Y})\ell(\overline{Z}) (\stackrel{2}{D^{\circ}}B)(\overline{\eta}, \overline{\eta},
   \overline{\eta}).
\end{eqnarray*}
In view Lemmas \ref{lem.1} and \ref{lem.3}, the above Equation
reduces to
\begin{equation}\label{eq.5}
  A(\overline{X},\overline{Y},\overline{Z}) = L(\stackrel{2}{D^{\circ}}B)(\overline{X}, \overline{Y}, \overline{Z})+
  \ell(\overline{Z})B(\overline{X}, \overline{Y})+\ell(\overline{Y})B(\overline{X}, \overline{Z}).
\end{equation}
On the other hand, from (\ref{C}) and Lemmas \ref{lem.1}, \ref{lem.2} and \ref{lem.3}, we have
\begin{eqnarray*}
    (\stackrel{2}{D^{\circ}}B)(\overline{X}, \overline{Y}, \overline{Z})&=&
   \ell(\overline{X})(\stackrel{2}{D^{\circ}}C)(\overline{Y}, \overline{Z})
   +L(\stackrel{2}{D^{\circ}}\stackrel{2}{D^{\circ}}C)(\overline{X}, \overline{Y},\overline{Z})\nonumber \\
  &&+L^{-1}\hbar(\overline{X},\overline{Z})C(\overline{Y})+\ell(\overline{Z})(\stackrel{2}{D^{\circ}}C)(\overline{X}, \overline{Y})\nonumber\\
&=&\ell(\overline{X})(\stackrel{2}{D^{\circ}}C)(\overline{Y}, \overline{Z})
   +L(\stackrel{2}{D^{\circ}}\stackrel{2}{D^{\circ}}C)(\overline{X}, \overline{Y},\overline{Z})\nonumber \\
  &&+L^{-1}\hbar(\overline{X},\overline{Z})C(\overline{Y})+L^{-1}\ell(\overline{Z})\ell(\overline{X})C(\overline{Y})\nonumber\\
  &&+L\ell(\overline{Z})(\stackrel{2}{D^{\circ}}\stackrel{2}{D^{\circ}}k)(\overline{X}, \overline{Y})\nonumber\\
&=&L^{-1}\ell(\overline{X})B(\overline{Y}, \overline{Z})
   +L(\stackrel{2}{D^{\circ}}\stackrel{2}{D^{\circ}}C)(\overline{X}, \overline{Y},\overline{Z})\\
  &&+L^{-1}\hbar(\overline{X},\overline{Z})C(\overline{Y})+L\ell(\overline{Z})(\stackrel{2}{D^{\circ}}\stackrel{2}{D^{\circ}}k)(\overline{X}, \overline{Y})\nonumber.
\end{eqnarray*}
From which, together (\ref{eq.5}), we obtain
\begin{eqnarray}
 A(\overline{X},\overline{Y},\overline{Z})&=& \mathfrak{S}_{\overline{X}, \overline{Y},\overline{Z}}\set{\ell(\overline{X})B(\overline{Y}, \overline{Z})}
 + \hbar(\overline{X},\overline{Z})C(\overline{Y})\nonumber\\
 && +L^{2}\set{(\stackrel{2}{D^{\circ}}\stackrel{2}{D^{\circ}}C)(\overline{X}, \overline{Y},\overline{Z})
 +\ell(\overline{Z})(\stackrel{2}{D^{\circ}}\stackrel{2}{D^{\circ}}k)(\overline{X}, \overline{Y})}\label{eq.6}.
 \end{eqnarray}
On the other hand, for every $(1)\pi$-form $\omega$, one can show that
\begin{equation}\label{eq.7}
    (\stackrel{2}{D^{\circ}}\stackrel{2}{D^{\circ}}\omega)(\overline{X}, \overline{Y},\overline{Z})-(\stackrel{2}{D^{\circ}}\stackrel{2}{D^{\circ}}\omega)(\overline{Y}, \overline{X},\overline{Z})=0
\end{equation}
Then the result follows from (\ref{eq.6}) and (\ref{eq.7}).
\end{proof}

\begin{thm}\label{thm.2} A Finsler manifold $(M,L)$ of scalar curvature $k$ reduces to
 a Finsler manifold of constant curvature $k$ if and only if the $\pi$-scalar form
$C=L\stackrel{2}{D^{\circ}}k$ vanishes.
\end{thm}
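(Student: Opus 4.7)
The forward implication is immediate: if $k$ is a constant scalar, then $\stackrel{2}{D^\circ}k=0$, and hence $C=L\stackrel{2}{D^\circ}k=0$. The substance of the theorem is the converse, and my plan for this is to exploit the explicit form of $\widehat{R^\circ}$ supplied by Theorem \ref{thm.1}(b) together with a Bianchi-type identity for the Berwald connection.

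Assuming $C=0$, positivity of $L$ forces $\stackrel{2}{D^\circ}k=0$, so $k$ is constant along the fibres of $\pi$ and therefore (the pullback of) a function on $M$. Moreover $B=L(\mathcal{P}\cdot\stackrel{2}{D^\circ}C)$ also vanishes, so Theorem \ref{thm.1}(b) degenerates to
$$\widehat{R^\circ}(\overline{X},\overline{Y}) = k\,\mathfrak{A}_{\overline{X},\overline{Y}}\{L\ell(\overline{X})\phi(\overline{Y})\}.$$
Setting $k_1:=\stackrel{1}{D^\circ}k$ and using $\stackrel{1}{D^\circ}L=0$, $\stackrel{1}{D^\circ}\ell=0$, $\stackrel{1}{D^\circ}\overline{\eta}=0$ (whence also $\stackrel{1}{D^\circ}\phi=0$), horizontal differentiation yields
$$(\stackrel{1}{D^\circ}\widehat{R^\circ})(\overline{X},\overline{Y},\overline{Z}) = k_1(\overline{X})\,L\,[\ell(\overline{Y})\phi(\overline{Z})-\ell(\overline{Z})\phi(\overline{Y})].$$

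I would then invoke the second Bianchi identity for the torsion-free Berwald connection which, after contraction by $\overline{\eta}$, takes the form
$$\mathfrak{S}_{\overline{X},\overline{Y},\overline{Z}}\{(\stackrel{1}{D^\circ}\widehat{R^\circ})(\overline{X},\overline{Y},\overline{Z})\}=0.$$
Substituting the preceding formula and specialising to $\overline{X}=\overline{\eta}$ (so that $\ell(\overline{\eta})=L$ and $\phi(\overline{\eta})=0$) collapses the cyclic sum to
$$L^2\bigl[k_1(\overline{Z})\phi(\overline{Y})-k_1(\overline{Y})\phi(\overline{Z})\bigr]=0.$$
Choosing two linearly independent $\pi$-vector fields $\overline{Y},\overline{Z}$ with $\ell(\overline{Y})=\ell(\overline{Z})=0$ (possible because $n\geq 3$) forces $k_1(\overline{Y})=0$ for every $\overline{Y}$ annihilated by $\ell$.

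The closing step upgrades ``$k_1$ vanishes on spatial vectors'' to $k_1\equiv 0$. Since $\stackrel{2}{D^\circ}k=0$, the value of $k_1(\overline{Y})$ at any $u\in\mathcal{T}M$ coincides with $dk_{\pi(u)}(\overline{Y}(u))$ and is independent of the fibre coordinate. For fixed $x\in M$ and nonzero $v\in T_xM$, the function $u\mapsto g_u(u,v)$ on $T_xM\setminus\{0\}$ is positive at $u=v$ and negative at $u=-v$ by positive-definiteness of $g_{-v}$, so by connectedness it vanishes for some $u$; that $u$ makes $v$ spatial, whence $dk_x(v)=0$. Therefore $dk\equiv 0$ and, assuming $M$ connected, $k$ is a global constant. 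I expect the principal obstacle to be securing the correct form of the second Bianchi identity for the Berwald connection and then converting spatial vanishing of $k_1$ to global vanishing via this Finsler analogue of the sweeping argument.
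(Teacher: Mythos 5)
Your proof is correct, and its skeleton coincides with the paper's up to the decisive last step: both arguments reduce $\widehat{R^{\circ}}$ to $kL\,\mathfrak{A}_{\overline{X},\overline{Y}}\{\ell(\overline{X})\phi(\overline{Y})\}$, feed this into the contracted Bianchi identity $\mathfrak{S}_{\overline{X},\overline{Y},\overline{Z}}(D^{\circ}_{\beta \overline{X}}\widehat{R^{\circ}})(\overline{Y},\overline{Z})=0$ (which the paper likewise extracts from the $h$-Bianchi identity plus $\widehat{P^{\circ}}=0$), and conclude that $k_1:=\stackrel{1}{D^{\circ}}k$ is proportional to $\ell$, i.e.\ vanishes on spatial vectors. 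One small slip: after setting $\overline{X}=\overline{\eta}$ the cyclic sum does not collapse to $L^{2}[k_1(\overline{Z})\phi(\overline{Y})-k_1(\overline{Y})\phi(\overline{Z})]$ alone --- the term $k_1(\overline{\eta})L[\ell(\overline{Y})\phi(\overline{Z})-\ell(\overline{Z})\phi(\overline{Y})]$ survives; this is harmless only because you immediately restrict to $\ell(\overline{Y})=\ell(\overline{Z})=0$, where it dies, so state the restriction before displaying the collapsed identity. Where you genuinely diverge is in killing the residual radial component $k_1(\overline{\eta})$. The paper stays inside the intrinsic calculus: it records the conclusion as $D^{\circ}_{\beta\overline{X}}k=L^{-1}(D^{\circ}_{\beta\overline{\eta}}k)\ell(\overline{X})$, applies $\stackrel{2}{D^{\circ}}$, and uses the commutation $(\stackrel{2}{D^{\circ}}\stackrel{1}{D^{\circ}}k)(\overline{X},\overline{Y})=(\stackrel{1}{D^{\circ}}\stackrel{2}{D^{\circ}}k)(\overline{Y},\overline{X})$ together with $\stackrel{2}{D^{\circ}}k=0$ and $\hbar(\cdot,\overline{\eta})=0$ to force $D^{\circ}_{\beta\overline{X}}k=0$. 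Your sweeping argument --- every $v\in T_xM$ is $g_u$-orthogonal to $u$ for some $u$ in the punctured fibre, since $g_v(v,v)=L^{2}(v)>0$ while $g_{-v}(-v,v)=-L^{2}(-v)<0$ and the fibre is connected for $n\geq 2$ --- reaches the same conclusion by a topological route. It buys independence from the $h$--$v$ commutation rule (which the paper only asserts), at the cost of stepping outside the tensor calculus and invoking positivity of $L$ on all of $\T M$ and connectedness of $T_xM\setminus\{0\}$ and of $M$; the paper's hypotheses cover these, so both endgames are valid.
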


\begin{proof} Firstly, suppose that  $(M,L)$ is Finsler manifold of scalar curvature $k$. If $(M,L)$ reduces to a Finsler
manifold of constant curvature $k$,  then the $\pi$-scalar form $C$ vanishes immediately.
\par
Conversely, suppose that $(M,L)$ is a Finsler manifold of scalar curvature $k$ such that  the
$\pi$-scalar form $C$ vanishes. Hence
\begin{equation}\label{eq.8}
    \stackrel{2}{D^{\circ}}k=0.
\end{equation}
By (\ref{(v)h-tors}), together with $C=0$, we obtain
 \begin{equation}\label{eq.9}
    \widehat{R^{\circ}}(\overline{X},\overline{Y})=kL\set{\ell(\overline{X})\overline{Y}-\ell(\overline{Y})\overline{X}}.
 \end{equation}
On the other hand, we have \cite{r96}:
\begin{equation*}\label{eq.10}
   \mathfrak{S}_{\overline{X},\overline{Y},\overline{Z}}\,
\{(D^{\circ}_{\beta \overline{X}}R^{\circ})(\overline{Y},
\overline{Z},\overline{W})+P^{\circ}(\widehat{R^{\circ}}(\overline{X},\overline{Y}),
\overline{Z})\overline{W}\}=0.
\end{equation*}
 From which, noting that the $(v)hv$-torsion $\widehat{P^{\circ}}$ vanishes \cite{r96}, it follows that
\begin{equation}\label{eq.11}
   \mathfrak{S}_{\overline{X},\overline{Y},\overline{Z}}\,
(D^{\circ}_{\beta \overline{X}}\widehat{R^{\circ}})(\overline{Y},
\overline{Z})=0.
\end{equation}
Now, from (\ref{eq.9}) and (\ref{eq.11}), using (\ref{eq.1}) and $D^{\circ}_{\beta
\overline{X}}\ell=0$, we get
\begin{eqnarray*}
  && L(D^{\circ}_{\beta
\overline{X}}k)(\ell(\overline{Y})\overline{Z}-\ell(\overline{Z})\overline{Y})
+  L(D^{\circ}_{\beta
\overline{Y}}k)(\ell(\overline{Z})\overline{X}-\ell(\overline{X})\overline{Z}) \\
   &&+L(D^{\circ}_{\beta
\overline{Z}}k)(\ell(\overline{X})\overline{Y}-\ell(\overline{Y})\overline{X})=0.
\end{eqnarray*}
Setting $\overline{Z}=\overline{\eta}$ into the above equation,
noting that $\ell(\overline{\eta})=L$ (Lemma \ref{lem.1}), we obtain
\begin{eqnarray*}
  && L({D^{\circ}}_{\beta
\overline{X}}k)(\ell(\overline{Y})\overline{\eta}-L\overline{Y})
+  L(D^{\circ}_{\beta
\overline{Y}}k)(L\overline{X}-\ell(\overline{X})\overline{\eta}) \\
   &&+L(D^{\circ}_{\beta
\overline{\eta}}k)(\ell(\overline{X})\overline{Y}-\ell(\overline{Y})\overline{X})=0.
\end{eqnarray*}
Taking the trace of both sides with respect to $\overline{Y}$, it
follows that
\begin{equation}\label{eq.12}
{D^{\circ}}_{\beta \overline{X}}k=L^{-1}({D^{\circ}}_{\beta
\overline{\eta}}k)\ell(\overline{X}).
\end{equation}
Applying the $v$-covariant derivative with respect to $\overline{Y}$
on both sides of (\ref{eq.12}), yields
\begin{equation*}
  \ell(\overline{Y})
  D^{\circ}_{\beta\overline{X}}k+L(\stackrel{2}{D^{\circ}}\stackrel{1}{D^{\circ}}
  k)(\overline{X},\overline{Y})=
  L^{-1}\hbar(\overline{X},\overline{Y})(D^{\circ}_{\beta\overline{\eta}}k)+\ell(\overline{X})
  (\stackrel{2}{D^{\circ}}\stackrel{1}{D^{\circ}} k)(\overline{\eta},\overline{Y}).
\end{equation*}
From (\ref{eq.8}), noting that $(\stackrel{2}{D^{\circ}}\stackrel{1}{D^{\circ}}
  k)(\overline{X},\overline{Y})=(\stackrel{1}{D^{\circ}}\stackrel{2}{D^{\circ}}
  k)(\overline{Y},\overline{X})$, the above relation reduces to (provided that $n\geq3$)
\begin{equation*}
  \ell(\overline{Y})
  D^{\circ}_{\beta\overline{X}}k=
  L^{-1}\hbar(\overline{X},\overline{Y})(D^{\circ}_{\beta\overline{\eta}}k).
\end{equation*}
Setting $\overline{Y}=\overline{\eta}$ into the above equation,
noting that $\ell(\overline{\eta})=L$ and
$\hbar(.,\overline{\eta})=0$, it follows that
$D^{\circ}_{\beta\overline{X}}k=0$. Consequently,
\begin{equation}\label{eq.13}
\stackrel{1}{D^{\circ}}k=0  .
\end{equation}
Now, (\ref{eq.8}) and (\ref{eq.13}) imply  that $k$ is a
constant.
\end{proof}

\begin{thm}\label{thm.3} A Finsler manifold $(M,L)$ of scalar curvature $k$ reduces to
 a Finsler manifold of constant curvature $k$ if and only if the $\pi$-scalar form
$B=L(\mathcal{P}\cdot\stackrel{2}{D^{\circ}}C)$ vanishes.
\end{thm}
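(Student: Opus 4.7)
The forward direction is a one-line check: if $k$ is constant then $\stackrel{2}{D^{\circ}}k=0$, so $C=0$, and hence $B=L(\mathcal{P}\cdot\stackrel{2}{D^{\circ}}C)=0$. All the substantive content lies in the converse, and the plan is to reduce it to Theorem \ref{thm.2} by establishing the implication $B=0\Rightarrow C=0$; once that is in hand, Theorem \ref{thm.2} immediately delivers the constancy of $k$.

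Assume therefore that $B=0$. Then trivially $\stackrel{2}{D^{\circ}}B=0$, and the auxiliary $\pi$-tensor $A:=L(\mathcal{P}\cdot\stackrel{2}{D^{\circ}}B)$ vanishes as well. The next step I would take is to invoke the identity from Lemma \ref{pro.1},
$$\mathfrak{A}_{\overline{X},\overline{Y}}\set{A(\overline{X},\overline{Y},\overline{Z})+C(\overline{X})\,\hbar(\overline{Y},\overline{Z})}=0,$$
whose derivation is a purely formal computation in terms of $B$, $C$ and the symmetry of iterated vertical covariant derivatives of Berwald connection, and therefore applies verbatim in the present scalar-curvature setting. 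Substituting $A=0$ leaves the pointwise identity
$$C(\overline{X})\,\hbar(\overline{Y},\overline{Z}) - C(\overline{Y})\,\hbar(\overline{X},\overline{Z}) = 0. \qquad (\ast)$$

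To finish, I would extract $C$ from $(\ast)$ by a standard trace argument. Contracting $(\ast)$ with $g^{-1}$ in the slots $\overline{X}$ and $\overline{Z}$ produces $\hbar(\overline{Y},C^{\#})-(n-1)\,C(\overline{Y})=0$, where $C^{\#}$ denotes the $g$-metric dual of $C$. Since $C(\overline{\eta})=0$ by Lemma \ref{lem.2}(a), one has $\ell(C^{\#})=0$ and hence $\hbar(\overline{Y},C^{\#})=g(\overline{Y},C^{\#})-\ell(\overline{Y})\ell(C^{\#})=C(\overline{Y})$. The identity thus collapses to $(n-2)\,C(\overline{Y})=0$, and since $n\geq 3$ this forces $C=0$.

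The step I expect to require the most care is the justification for using Lemma \ref{pro.1} in a situation where constant curvature is not yet available; however, an inspection of its proof confirms that the identity is derived by a formal manipulation from the definitions of $B$, $C$ and equation (\ref{eq.7}), with no appeal to the constant-curvature hypothesis, so it remains valid for an arbitrary Finsler space of scalar curvature and the reduction to Theorem \ref{thm.2} goes through cleanly.
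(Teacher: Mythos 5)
Your proposal is correct and follows essentially the same route as the paper: forward direction via $C=0$, and for the converse $B=0\Rightarrow A=0$, then Lemma \ref{pro.1} plus a trace/contraction yielding $(n-2)C=0$, hence $C=0$ and Theorem \ref{thm.2} applies. Your observation that Lemma \ref{pro.1} is a formal identity not actually requiring constant curvature is exactly the reading the paper's argument depends on (and your $g^{-1}$-contraction is just the metric-dual version of the paper's trace of $C(\overline{X})\phi(\overline{Y})-C(\overline{Y})\phi(\overline{X})=0$ using $\operatorname{Tr}(\phi)=n-1$).
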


\begin{proof} Let  $(M,L)$ be a Finsler manifold of scalar curvature $k$.

If $(M,L)$ reduces to a Finsler manifold of constant curvature $k$, then, by Theorem \ref{thm.2}, the $\pi$-scalar form $C$
vanishes. Consequently, the $\pi$-scalar form $B$ vanishes.

Conversely, suppose that $(M,L)$ has the property that the
$\pi$-scalar form $B$ vanishes. Hence, the $\pi$-scalar form $A$ of Lemma \ref{pro.1}
vanishes. Consequently by Lemma \ref{pro.1}, we have
\begin{equation*}
C(\overline{X})\phi(\overline{Y})-C(\overline{Y})\phi(\overline{X})=0
\end{equation*}
Taking the trace of both sides of the above equation with respect to
$\overline{Y}$, noting that $Tr(\phi)=n-1$ \cite{r86}, it follows
that
\begin{equation*}
(n-2)C(\overline{X})=0.
\end{equation*}
From which, the $\pi$-scalar form $C$ vanishes as $n\geq3$. Consequently, by Theorem \ref{thm.2}, $(M,L)$ is of constant
curvature $k$.
\end{proof}

\begin{thm}\label{thm.4} A Finsler manifold $(M,L)$ of scalar curvature $k$ reduces to
 a Finsler manifold of constant curvature $k$ if and only if the $\pi$-scalar form
$A=L(\mathcal{P}\cdot\stackrel{2}{D^{\circ}}B)$ vanishes.
\end{thm}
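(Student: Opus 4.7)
My plan is to mirror the structure of Theorem \ref{thm.3}, reducing the claim to Theorem \ref{thm.2} via Lemma \ref{pro.1}. The key idea I would exploit is that the chain of $\pi$-scalar forms $C\mapsto B\mapsto A$, built from successive projected vertical derivatives, collapses at each stage when $k$ is constant, and conversely Lemma \ref{pro.1} provides an algebraic identity that lets one recover $C$ from the antisymmetric part of $A$. For the forward implication, I would invoke Theorem \ref{thm.2} to conclude $C=L\stackrel{2}{D^{\circ}}k=0$; from (\ref{C}) this immediately gives $B=L(\mathcal{P}\cdot\stackrel{2}{D^{\circ}}C)=0$, and therefore $A=L(\mathcal{P}\cdot\stackrel{2}{D^{\circ}}B)=0$.

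For the converse, I would assume $A=0$ and apply Lemma \ref{pro.1} to obtain
$$\mathfrak{A}_{\overline{X},\overline{Y}}\{C(\overline{X})\hbar(\overline{Y},\overline{Z})\}=0,$$
i.e., $C(\overline{X})\hbar(\overline{Y},\overline{Z})=C(\overline{Y})\hbar(\overline{X},\overline{Z})$. To isolate $C$ I would take the trace of both sides with respect to $\overline{Y}$ and $\overline{Z}$ using $g^{-1}$. The identity $g^{ij}\hbar_{ij}=n-1$ handles the first term, while for the second, the fact that $C$ is indicatory (Lemma \ref{lem.2}(b)) together with $i_{\overline{\eta}}C=0$ (Lemma \ref{lem.2}(a)) ensures that the $\ell\otimes\ell$ component of $\hbar=g-\ell\otimes\ell$ drops out, leaving exactly $C(\overline{X})$. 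This yields $(n-2)C(\overline{X})=0$, hence $C\equiv 0$ since $n\geq 3$, and Theorem \ref{thm.2} then closes the argument.

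The main obstacle I anticipate is not conceptual but rather the bookkeeping in the trace step, which requires handling the radial component of $\hbar$ with care and relies critically on the indicatory property of $C$; this is the same contraction trick used at the end of Theorems \ref{thm.2} and \ref{thm.3}. A secondary subtlety is that Lemma \ref{pro.1} is stated under the hypothesis of constant curvature, whereas its proof uses only the definitions of $A$, $B$, $C$, the commutativity (\ref{eq.7}) of iterated vertical derivatives, and Lemmas \ref{lem.1}--\ref{lem.3}; one should therefore verify (or remark) that the conclusion of Lemma \ref{pro.1} applies to any Finsler space of scalar curvature, which is the setting we actually need.
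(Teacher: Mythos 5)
Your proposal is correct and coincides with the argument the paper intends: its proof of Theorem~\ref{thm.4} is literally ``similar to that of Theorem~\ref{thm.3}'', i.e.\ the forward direction via Theorem~\ref{thm.2} and the converse via Lemma~\ref{pro.1} followed by the trace contraction yielding $(n-2)C=0$, which is exactly what you do. Your side remark that Lemma~\ref{pro.1}, though stated for constant curvature, is proved without using that hypothesis is accurate and indeed necessary for the converse direction to be legitimate.
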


\begin{proof} The proof is similar to that of the above theorem.
\end{proof}

Summing up, we have.
\begin{thm}\label{thm.5} Let $(M,L)$ be a Finsler manifold of scalar curvature $k$.
 The following assertion are equivalent:
\begin{description}
\item[(a)] $(M,L)$ is of constant curvature $k$,
\item[(b)] The $(1)\pi$-scalar form $C=L\stackrel{2}{D^{\circ}}k$ vanishes,
\item[(c)] The $(2)\pi$-scalar form $B=L(\mathcal{P}\cdot\stackrel{2}{D^{\circ}}C)$ vanishes,
\item[(d)] The $(3)\pi$-scalar form $A=L(\mathcal{P}\cdot\stackrel{2}{D^{\circ}}B)$ vanishes.
\end{description}
\end{thm}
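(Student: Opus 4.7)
The plan is to observe that Theorem \ref{thm.5} is purely a synthesis of the three preceding theorems, so no new computation is required: I would simply string together the equivalences already established in Theorems \ref{thm.2}, \ref{thm.3} and \ref{thm.4}, using (a) as the common pivot.

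More concretely, I would proceed as follows. Starting from the hypothesis that $(M,L)$ is of scalar curvature $k$, Theorem \ref{thm.2} gives the equivalence $(a)\Leftrightarrow (b)$, namely that $(M,L)$ is of constant curvature $k$ iff $C=L\stackrel{2}{D^{\circ}}k$ vanishes. Next, Theorem \ref{thm.3} gives the equivalence $(a)\Leftrightarrow (c)$, and Theorem \ref{thm.4} gives $(a)\Leftrightarrow (d)$. Chaining these three bi-implications through their common endpoint (a) yields the pairwise equivalence of all four statements. No appeal to the Lemmas of Section 2 or to Lemma \ref{pro.1} is needed at this stage, since the machinery (in particular the identity (\ref{eq.7}) and the trace arguments using $n\geq 3$) has already been absorbed into Theorems \ref{thm.3} and \ref{thm.4}.

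I do not anticipate any genuine obstacle: the real work has been done in the previous three theorems. The only point worth stating explicitly in the write-up is that one should not attempt to prove, say, $(b)\Rightarrow(c)$ or $(c)\Rightarrow(d)$ directly without passing through (a); such direct implications, while true a posteriori, would require recomputing the trace arguments on $C$, $B$, $A$ that are already encoded in the proofs of Theorems \ref{thm.2}--\ref{thm.4}. Routing everything through (a) keeps the proof a one-line logical synthesis, which is the natural form of a summarizing theorem.
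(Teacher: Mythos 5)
Your proposal is correct and matches the paper exactly: the paper introduces Theorem \ref{thm.5} with the phrase ``Summing up, we have,'' offering no further argument because the equivalences $(a)\Leftrightarrow(b)$, $(a)\Leftrightarrow(c)$ and $(a)\Leftrightarrow(d)$ are precisely Theorems \ref{thm.2}, \ref{thm.3} and \ref{thm.4}. Your observation that one should route all implications through $(a)$ rather than attempt direct proofs of $(b)\Rightarrow(c)$ etc.\ is a sensible remark, and nothing further is needed.
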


\begin{cor}\label{pro.5} A Finsler manifold of scalar curvature is of constant curvature if and only
if $\mathcal{P}\cdot F=0$, where $F$ is the $\pi$-form defined by
Corollary \ref{cor.1}.
\end{cor}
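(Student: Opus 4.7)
The plan is to reduce $\mathcal{P}\cdot F = 0$ directly to the vanishing of $B$ and then invoke the equivalence (a)$\Leftrightarrow$(c) of Theorem~\ref{thm.5}.

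First I would unpack $\mathcal{P}\cdot F$ using Definition~\ref{ind.}(b): since $F$ is a $\pi$-tensor field of type $(0,2)$,
\begin{equation*}
(\mathcal{P}\cdot F)(\overline{X},\overline{Y}) = F(\phi(\overline{X}),\phi(\overline{Y})) = \tfrac{1}{3}\bigl\{B(\phi(\overline{X}),\phi(\overline{Y})) + 2\,C(\phi(\overline{X}))\,\ell(\phi(\overline{Y}))\bigr\}.
\end{equation*}
The key observation is that $\ell\circ\phi = 0$: indeed, for any $\overline{Y}$,
\begin{equation*}
\ell(\phi(\overline{Y})) = \ell(\overline{Y}) - L^{-1}\ell(\overline{Y})\,\ell(\overline{\eta}) = 0,
\end{equation*}
by Lemma~\ref{lem.1}(a). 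Hence the second term in the bracket drops out.

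Next I would invoke Lemma~\ref{lem.3}(b), which states that $B$ is indicatory, i.e.\ $\mathcal{P}\cdot B = B$. This gives
\begin{equation*}
(\mathcal{P}\cdot F)(\overline{X},\overline{Y}) = \tfrac{1}{3}B(\phi(\overline{X}),\phi(\overline{Y})) = \tfrac{1}{3}(\mathcal{P}\cdot B)(\overline{X},\overline{Y}) = \tfrac{1}{3}B(\overline{X},\overline{Y}).
\end{equation*}
Therefore $\mathcal{P}\cdot F = 0$ if and only if $B = 0$.

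Finally, the equivalence (a)$\Leftrightarrow$(c) of Theorem~\ref{thm.5} states that, for a Finsler manifold of scalar curvature $k$, the condition $B = 0$ is equivalent to $(M,L)$ being of constant curvature $k$. Combining the two equivalences gives the corollary. There is no real obstacle here: the whole argument consists in showing that the $C\otimes\ell$ term in $F$ is killed by $\mathcal{P}$ and that $B$ is already indicatory, so $\mathcal{P}\cdot F$ is a nonzero multiple of $B$; the substance of the statement is already contained in Theorem~\ref{thm.5}.
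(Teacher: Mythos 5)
Your proof is correct and follows essentially the same route as the paper: the paper's proof rests on the identity $\mathcal{P}\cdot F=\tfrac{1}{3}B$ (stated there as "easily proved") combined with Theorem \ref{thm.5}, and you have simply supplied the short verification of that identity via $\ell\circ\phi=0$ and the indicatory property of $B$.
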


\begin{proof} The proof follows from the identity
\begin{equation*}\label{eq.22}
\mathcal{P}\cdot F(\overline{X},\overline{Y})=\frac{1}{3}B(\overline{X},\overline{Y}).
\end{equation*}
which can easily be proved.
\end{proof}


\vspace{-1cm}
\providecommand{\bysame}{\leavevmode\hbox to3em{\hrulefill}\thinspace}
\providecommand{\MR}{\relax\ifhmode\unskip\space\fi MR }
\providecommand{\MRhref}[2]{%
  \href{http://www.ams.org/mathscinet-getitem?mr=#1}{#2}
}
\providecommand{\href}[2]{#2}

\end{document}